\newtheorem{theorem}{Theorem}[section]
\newtheorem{definition}{Definition}[section]
\newtheorem{proposition}[theorem]{Proposition}
\newtheorem{lemma}[theorem]{Lemma}
\newtheorem{corollary}[theorem]{Corollary}
\newtheorem{remark}[theorem]{Remark}
\newtheorem{theoremalph}{Theorem}
 \def\NN{{\mathbb N}}
\def \D{\Delta}
\def \cD{\mathcal{D}}
\begin{document}

\title[Physical measures for partially hyperbolic diffeomorphisms]{Physical measures for partially hyperbolic diffeomorphisms with mixed hyperbolicity}

\author[Zeya Mi]{Zeya Mi}

\address{School of Mathematics and Statistics,
Nanjing University of Information Science and Technology, Nanjing 210044, Jiangsu, China}
\email{\href{mailto:mizeya@163.com}{mizeya@163.com}}

\author[Yongluo Cao]{Yongluo Cao}

\address{Department of Mathematics, Soochow University, Suzhou 215006, Jiangsu, China.}
\address{Center for Dynamical Systems and Differential Equation, Soochow University, Suzhou 215006, Jiangsu, China}
\email{\href{mailto:ylcao@suda.edu.cn}{ylcao@suda.edu.cn}}

\thanks{Y. Cao is the corresponding author. Z. Mi is partially supported by National Key R\&D Program of China(2022YFA1007800) and NSFC 12271260. Y. Cao is partially  supported by National Key R\&D Program of China(2022YFA1005802) and NSFC 11790274.}

\date{\today}

\keywords{Physical measures, SRB measures, Partially hyperbolic, Lyapunov exponent}
\subjclass[2010]{37C40, 37D25, 37D30}

\begin{abstract}
We study the partially hyperbolic diffeomorphims whose center direction admits the $u$-definite property in the sense that all the central Lyapunov exponents of each ergodic Gibbs $u$-state are either all positive or all negative.
We prove that for this kind of partially hyperbolic diffeomorphisms, there are finitely many physical measures, whose basins cover a full Lebesgue measure subset of the ambient space.
\end{abstract}

\maketitle


\section{Introduction}
Given a smooth diffeomorphism $f$ on a compact Riemannian manifold $M$, one would like to describe asymptotic distribution of the orbits of $f$. From the ergodic viewpoint, such an endeavor has been realized by physical measures.
A \emph{physical measure} of $f$ is an invariant Borel probability $\mu$ on $M$ for which the \emph{basin} of $\mu$
$$
Basin(\mu):=\left\{x\in M: \frac{1}{n}\sum_{i=0}^{n-1}\delta_{f^i(x)}\xrightarrow[n\to +\infty]{weak^*}\mu\right\}
$$
exhibits positive Lebesgue measure. The theory on physical measures were firstly established in uniformly hyperbolic systems \cite{Sin72, Rue76, Bow75, BoR75} by Sinai, Ruelle and Bowen.
It is expected that such theory can be extended for large classes of systems outside uniform hyperbolicity, see \cite[Chapter 11]{BDV05} and \cite{pa,v2,y02} for related conjectures.  

In the present paper, we investigate the existence and finiteness of physical measures in the setting of partially hyperbolic diffeomorphisms (see \S \ref{dpe}). 
For a $C^2$ partially hyperbolic diffeomorphism exhibiting strong unstable direction $E^u$, any physical measure has the property that its conditional measures along strong unstable leaves are absolutely continuous with respect to the corresponding Lebesuge measures. We call measures with this property as \emph{Gibbs $u$-states}, a notion borrowed from Pesin, Sinai \cite{ps82}.
Observe that Gibbs $u$-states include any existing physical measures, while not all Gibbs $u$-states are physical measures.

There is a sequence of works that focus on finding physical measures under the hyperbolicity of Gibbs $u$-states.
For a partially hyperbolic diffeomorphism with splitting $E^u\oplus E^c \oplus E^s$, we say $E^c$ is mostly contracting (resp. mostly expanding) if all the central Lyapunov exponents of every Gibbs $u$-state are negative (resp. positive).
When $E^c$ is mostly expanding, Bonatti-Viana \cite[Theorem A]{bv} proved the existence of finitely many physical measures, such that Lebesgue almost every point is contained in the basins of these physical measures, which we call basin covering property. In a similar spirit, 
Andersson-Vasqueze \cite[Theorem C]{AV15} showed the same result when $E^c$ is mostly contracting. Mi-Cao-Yang \cite[Theorem A]{MCY17} considered the case when $E^c$ admits the sub-splitting $E^{cu}\oplus_{\succ} E^{cs}$ such that $E^{cu}$ is mostly expanding and $E^{cs}$ is mostly contracting, they obtained the same result on physical measures.

All of the above results presuppose the existence of hyperbolic Gibbs $u$-state with definite sign on the center direction. In a recent work, Hua-Yang-Yang \cite[Corollary E]{HYY19} studied the partially hyperbolic with one-dimensional center, they obtained the finitely many physical measures with basin covering property when all the Gibbs $u$-states are hyperbolic, i.e., every Gibbs $u$-state admits nonzero central Lyapunov exponents. What is case for partially hyperbolic diffeomorphism with higher dimension center?

%


Consider $g$ as a linear transtive Anosov diffeomorphism on tours $\mathbb{T}^2$ having eigenvalues $0<\lambda_1<1<\lambda_2$. Let $h$ be a Morse-Smale diffeomorphism on sphere $\mathbb{S}^2$ with a sink $p$ of eigenvalue $0<\lambda_1<\mu_1$ and a single source $q$ of eigenvalue $1<\mu_2<\lambda_2$. Now consider $f: \mathbb{T}^2\times \mathbb{S}^2\to \mathbb{T}^2\times \mathbb{S}^2$ defined by $f=g\times h$.
Then $f$ is a partially hyperbolic diffeomorphism with $E^c$ tangent to the fibers $\{x\}\times \mathbb{S}^2$. Let $\mu$ be the unique physical measure of $g$ on $\mathbb{T}^2$. Thus, $\mu\times \delta_p$ and $\mu\times \delta_q$ are the two Gibbs $u$-states of $f$. Moreover, one can check that $\mu\times \delta_p$ is a physical measure of $f$, whose basin covers a full Lebesgue measure of $M$.

The main feature of the above example is the coexistence of Gibbs $u$-states with different signs on central Lyapunov exponents.
Inspired by the above observation, it makes sense to introduce the following notion. 

\begin{definition}
Assume that $f$ is a $C^2$ diffeomorphism with a partially hyperbolic splitting 
$
TM=E^u\oplus_{\succ} E^c\oplus_{\succ} E^s.
$
We say $E^c$ is \emph{u-definite} if for every ergodic Gibbs $u$-state $\mu$,
\begin{itemize}
\item either, all the Lyapunov exponents of $\mu$ along $E^c$ are positive,
\item or, all the Lyapunov exponents of $\mu$ along $E^c$ are negative.
\end{itemize}
\end{definition}

The main result of this paper is the following: 

\begin{theoremalph}\label{TheoA}
Let $f$ be a $C^2$ diffeomorphism with a partially hyperbolic splitting 
$
TM=E^u\oplus_{\succ} E^c\oplus_{\succ} E^s.
$
If $E^c$ is u-definite, then there are finitely many physical measures, whose union of basins cover a full Lebesgue measure subset of $M$.
\end{theoremalph}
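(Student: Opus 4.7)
The strategy is to use the u-definite hypothesis to partition the ergodic Gibbs $u$-states into two classes according to the sign of their central Lyapunov exponents, and then to combine, and adapt to each class separately, the techniques from Bonatti-Viana \cite{bv} for mostly expanding centers and Andersson-Vasquez \cite{AV15} for mostly contracting centers. Write the set of ergodic Gibbs $u$-states as a disjoint union $\mathcal{G}^u_+(f) \sqcup \mathcal{G}^u_-(f)$ according to whether all central exponents are positive or all negative. For $\mu \in \mathcal{G}^u_-(f)$, every Oseledets exponent along $E^c \oplus E^s$ is negative, so Pesin's stable manifold theorem provides local stable manifolds of dimension $\dim E^c + \dim E^s$ at $\mu$-a.e. point; absolute continuity of the strong unstable foliation then forces the basin of $\mu$ to have positive Lebesgue measure, so $\mu$ is a physical measure, along the lines of \cite{AV15}. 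For $\mu \in \mathcal{G}^u_+(f)$, positivity of central exponents together with absolute continuity of $\mu$ along $W^u$ implies, via the Ledrappier-Young entropy formula, that $\mu$ is absolutely continuous along its Pesin $cu$-manifolds; Bonatti-Viana's argument applied to the enlarged unstable bundle $E^u \oplus E^c$ then shows that $\mu$ is physical.

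Having identified every ergodic Gibbs $u$-state as a physical measure, I would next establish finiteness. Since different ergodic physical measures have pairwise disjoint basins of positive Lebesgue measure, there are at most countably many. Finiteness is then obtained by showing that the central Lyapunov exponents are uniformly bounded away from zero on each of $\mathcal{G}^u_+(f)$ and $\mathcal{G}^u_-(f)$, which is a semicontinuity argument on the weak-$*$ compact convex set of Gibbs $u$-states combined with the u-definite hypothesis (any weak-$*$ limit is itself a Gibbs $u$-state whose ergodic decomposition falls back into $\mathcal{G}^u_+(f) \cup \mathcal{G}^u_-(f)$). A uniform Pesin block then captures a definite fraction of the mass of every ergodic physical measure in the class, forcing finitely many such measures in each class, as in \cite{bv,AV15}.

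For the basin-covering property, I would invoke the classical fact (see \cite{BDV05}) that for Lebesgue-a.e. $x \in M$, every weak-$*$ accumulation point of $\frac{1}{n}\sum_{i=0}^{n-1}\delta_{f^i(x)}$ is a Gibbs $u$-state. Its ergodic decomposition therefore consists entirely of the finitely many physical measures produced above. A Hopf-style argument along $W^u$-leaves, using absolute continuity of the strong unstable foliation together with the Pesin stable manifolds attached to each ergodic physical measure in $\mathcal{G}^u_+(f) \cup \mathcal{G}^u_-(f)$, upgrades accumulation to genuine convergence and assigns Lebesgue-a.e. $x$ to a unique physical basin.

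The main obstacle is the finiteness step. The arguments of \cite{bv} and \cite{AV15} handle only a single sign on $E^c$; in our mixed setting the two classes coexist, and one must rule out sequences of distinct ergodic physical measures, possibly drawn from both classes, weak-$*$ accumulating on a Gibbs $u$-state whose ergodic decomposition could a priori mix signs on $E^c$. The decisive use of the u-definite hypothesis, phrased as an upper semicontinuity statement for the central-exponent functionals on the set of Gibbs $u$-states that survives passage to ergodic components, is the technical heart I would have to control in order to separate the analysis of the two classes and apply the existing finiteness machinery to each one independently.
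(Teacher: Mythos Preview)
Your proposal contains a genuine gap at the very first step. You claim that for $\mu\in\mathcal{G}^u_+(f)$, positivity of the central exponents together with absolute continuity along $W^u$ forces, via Ledrappier--Young, absolute continuity along Pesin $cu$-manifolds, hence that every such $\mu$ is physical. This is false. Being a Gibbs $u$-state only gives $h_\mu(f)\ge\int\log|\det Df|_{E^u}|\,d\mu$; it does not give the Pesin formula $h_\mu(f)=\int\log|\det Df|_{E^u\oplus E^c}|\,d\mu$ needed to conclude that $\mu$ is a Gibbs $cu$-state. The paper's own introductory example already exhibits this: for $f=g\times h$ with $g$ Anosov and $h$ Morse--Smale with source $q$, the measure $\mu\times\delta_q$ is an ergodic Gibbs $u$-state with all central exponents positive, but it is \emph{not} a Gibbs $cu$-state and \emph{not} a physical measure (its basin has Lebesgue measure zero). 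So the physical measures are not simply the ergodic Gibbs $u$-states; one must single out the genuine Gibbs $cu$-states among those in $\mathcal{G}^u_+(f)$, and your argument does not do this.

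Your finiteness argument has the same underlying problem, and the paper explicitly flags it: the set $\mathcal{G}^u_+(f)$ is \emph{not} weak-$*$ closed in the mixed setting, so no semicontinuity argument on Gibbs $u$-states can produce a uniform positive lower bound on the central exponents. A sequence in $\mathcal{G}^u_+(f)$ may accumulate on a Gibbs $u$-state whose ergodic components lie in $\mathcal{G}^u_-(f)$; the $u$-definite hypothesis says nothing to prevent this. This is precisely why the paper abandons the geometric route of \cite{AV15,MCY17}. Instead, the paper works with the sets $\mathcal{A}=\mathcal{G}^u_-(f)$ and $\mathcal{B}=\{\text{ergodic Gibbs }cu\text{-states}\}$ (the latter being a proper subset of $\mathcal{G}^u_+(f)$ in general), and uses a variational route: the pseudo-physical measure theory of \cite{ce11,cce15} yields the entropy inequality $h_\mu(f)\ge\int\log|\det Df|_{E^u\oplus E^c}|\,d\mu$ for the relevant limit measures, which combined with Ruelle's inequality and the $u$-definite hypothesis forces them to be Gibbs $cu$-states. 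Both basin covering (Lemma~\ref{ccu} and Proposition~\ref{cup}) and finiteness (Proposition~\ref{ess} and Lemma~\ref{sec}) hinge on this mechanism rather than on any compactness of $\mathcal{G}^u_\pm(f)$.
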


Different to the previous settings in \cite{bv,AV15}, where all the central Lyapunov exponents of Gibbs $u$-states have the same sign, the $u$-definite property on $E^c$ dose allow Gibbs $u$-states to present the mixed behavior, i.e., different Gibbs $u$-states can possess different signs on central Lyapunov exponents. Thus, Theorem \ref{TheoA} can be seen as an extension of \cite{bv, AV15}. 

We would like to point out that when ${\rm dim}E^c=1$ as in \cite{HYY19}, the hyperbolicity of Gibbs $u$-states is equivalent to the $u$-definite property on $E^c$. When ${\rm dim}E^c\ge 2$, the following counter-example
indicates that just the hyperbolicity of Gibbs $u$-states can not deduce the conclusion of physical measures as in Theorem \ref{TheoA}.

Let $g$ be the time one map of the flow exhibiting the attractor of ``Bowen's eye", which consisting of a double saddle connection between two saddles $A$ and $B$ (see Figure \ref{k}). Now take $f$ as the product of $g$ with a transitive Anosov diffeomorphism $h$. By taking eigenvalues of $A, B$ suitably, $f$ will be a partially hyperbolic diffeomorphism with following properties (see \cite[Example 1.6]{v1}):
\begin{itemize}
\item $f$ has two ergodic hyperbolic Gibbs $u$-states, each of them admits central Lyapunov exponents with opposite signs.
\smallskip
\item $f$ has no physical measures.
\end{itemize}

\begin{figure}[h]\label{k}
	\centering
	\includegraphics[width=0.60\textwidth]{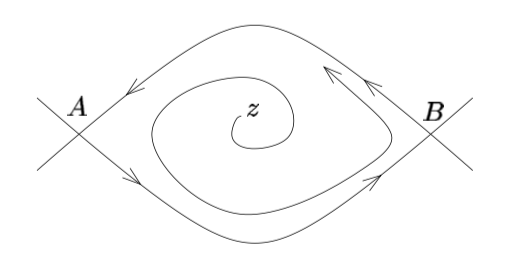}
	\caption{``Bowen's eye".}
\end{figure}


By applying the absolute continuity of Pesin stable lamination (e.g. see \cite[$\S$ 8.6]{bp07}), one knows that each ergodic Gibbs $u$-state with negative central Lyapunov exponents must be a physical measure, which is just the setting in \cite{bv}. The main difficulty in the proof of Theorem \ref{TheoA} is to select the remaining physical measures from Gibbs $u$-states having positive central Lyapunov exponents. We solve this issue by constructing \emph{Gibbs $cu$-states}--the invariant measures whose disintegration on Pesin unstable manifolds tangent to center-unstable direction $E^u\oplus E^c$ are absolutely continuous w.r.t. Lebesgue measures on these manifolds. This has been investigated in the previous works \cite{AV15, MCY17}, where the Gibbs $cu$-states were built in a geometrical way. Both in \cite{AV15, MCY17}, the main technical step is to get non-uniform expansion on center from the compactness of the set of Gibbs $u$-states. This ensures that they can apply the techniques from \cite{ABV00, AD} to construct Gibbs $cu$-states. We emphasize that due to the absence of compactness of the set of Gibbs $u$-states having positive central Lyapunov exponents, the method from \cite{AV15, MCY17} cannot be adapted to our setting. We take a different strategy, we follow a variational approach by utilizing the entropy formulas to build Pesin entropy formula, similar to \cite{HYY19}. Additionally, some ideas from the theory of pseudo-physical measures will be involved as well.

\section{Preliminary}

\subsection{Basic notations}\label{e}
Throughout, we will use $M$ to denote the compact Riemannian manifold and ${\rm Leb}$ to denote the Lebesgue measure of $M$ given by the Riemmannian metric. Given a smooth sub-manifold $D$, denote by ${\rm Leb}_D$ the induced Lebesgue measure in $D$.

Let $\mathscr{M}$ be the space of Borel probability measures on $M$ endowed with the weak$^*$-topology. Since $M$ is a compact metric space, this topology is metrizable. We denote by $dist$ a metric on $\mathscr{M}$.

Let $f$ be a diffeomorphism on $M$.
For $x\in M$, denote by $\mathscr{M}(x)$ the set of limit measures of 
$
\{\frac{1}{n}\sum_{i=0}^{n-1}\delta_{f^i(x)}\}_{n\in \NN}.
$
In other words, we have $\mu\in \mathscr{M}(x)$ if there exists a subsequence $\{n_i\}$ tends to infinity such that 
$$
\frac{1}{n_i}\sum_{j=0}^{n_i-1}\delta_{f^j(x)}\xrightarrow[i\to +\infty]{weak^*} \mu.
$$

\subsection{Dominated splitting and partially hyperbolicity}\label{dpe}
Let $f$ be a $C^1$ diffeomorphism on $M$. 
A $Df$-invariant splitting $TM=E\oplus_{\succ} F$ of the tangent bundle is called a \emph{dominated splitting} if $E$ dominates $F$, i.e., there exist $C>0$ and $0<\lambda<1$ such that for every $x\in M$,
$$
\|Df^n|_{F(x)}\|\cdot \|Df^{-n}|_{E(f^n(x))}\|<C\lambda^n, \quad \forall n\in \NN.
$$


One says that $f$ is a \emph{partially hyperbolic} diffeomorphism if there exists a dominated splitting 
$TM=E^u\oplus_{\succ} E_1\oplus_{\succ}\cdots \oplus_{\succ} E_k\oplus_{\succ} E^s$ 
on the tangent bundle such that
\begin{itemize}
\item $E^u$ is uniformly expanding and $E^s$ is uniformly contracting.
\item at least one of $E^u$ and $E^s$ is nontrivial.
\end{itemize}
 
For any fixed small $\delta>0$, denote by $W^u_{\delta}(x)$ the local unstable manifold (tangent to $E^u$) of $x\in M$ with size $\delta$. For each $x\in M$, define
$$
D_{n}^u(x):=f^{-n}(W^u_{\delta}(f^n(x))), \quad n\in \NN.
$$
We will call $\{D_{n}^u(x)\}_{n\in \NN}$ the \emph{unstable density basis} of $x$, mainly due to the following result(see \cite[Proposition 3.1]{Xia} or \cite[Theorem 2.18]{CYZ18}).

\begin{lemma}\label{ubd}
Let $f$ be a diffeomorphism with partially hyperbolic splitting $TM=E^u\oplus_{\succ} E^{cs}$. If $D$ is an unstable disk satisfying ${\rm Leb}_D(A)>0$ for some measurable set $A$, then for ${\rm Leb}_D$-a.e. $x\in A\cap D$ we have
\begin{equation}\label{und}
\lim_{n\to +\infty}\frac{{\rm Leb}_D(D_{n}^u(x)\cap A)}{{\rm Leb}_D(D_{n}^u(x))}=1.
\end{equation}
\end{lemma}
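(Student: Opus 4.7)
The approach is to establish that the shrinking family $\{D_n^u(x)\}_{n\in\NN}$ is a Lebesgue differentiation basis on the smooth submanifold $D$, and then apply the Lebesgue differentiation theorem for that basis to the indicator function $\chi_A$. The single technical engine is the bounded-distortion estimate for iterates of $f$ along unstable disks, valid because $f$ is $C^2$ (even $C^{1+\alpha}$ suffices).

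\textbf{Step 1 (Bounded distortion).} First I would establish that there is a constant $K>0$, independent of $n$ and of the unstable disk $D'$ of diameter at most $\delta$, such that
$$K^{-1}\ \le\ \frac{|\det Df^n|_{T_yD'}|}{|\det Df^n|_{T_zD'}|}\ \le\ K\qquad\text{for all }y,z\in D'.$$
This is classical: uniform expansion on $E^u$ forces the backward orbits of any two points on $D'$ to cluster exponentially, so telescoping $\log|\det Df|_{E^u}|$ at $f^jy$ and $f^jz$ produces a convergent geometric series using H\"older continuity of the unstable Jacobian.

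\textbf{Step 2 (Reduction to a uniform-size basis).} Applying Step 1 to $D'=D_n^u(x)$ and performing the change of variables under $f^n$, I would obtain the pinching
$$\frac{1}{K}\cdot\frac{{\rm Leb}(W^u_\delta(f^n x)\cap f^n(A))}{{\rm Leb}(W^u_\delta(f^n x))}\ \le\ \frac{{\rm Leb}_D(D_n^u(x)\cap A)}{{\rm Leb}_D(D_n^u(x))}\ \le\ K\cdot\frac{{\rm Leb}(W^u_\delta(f^n x)\cap f^n(A))}{{\rm Leb}(W^u_\delta(f^n x))}.$$
This converts the density question for the shrinking, possibly elongated sets $D_n^u(x)$ into one for the geometrically nice balls $W^u_\delta(\cdot)$ of fixed intrinsic radius in the unstable foliation.

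\textbf{Step 3 (Vitali-type differentiation).} Finally I would apply a Vitali covering argument to the basis $\{D_n^u(x)\}$. The required engulfing/doubling property again comes from Step 1: if $y\in D_n^u(x)$, then $W^u_\delta(f^n y)\subset W^u_{2\delta}(f^n x)$, so $D_n^u(y)\subset f^{-n}(W^u_{2\delta}(f^n x))$, and bounded distortion yields ${\rm Leb}_D(f^{-n}(W^u_{2\delta}(f^n x)))\le C\,{\rm Leb}_D(D_n^u(x))$ with $C$ independent of $n$ and $x$. With this engulfing, the standard weak $(1,1)$ maximal inequality for the basis yields Lebesgue differentiation for $L^1$ functions; applied to $\chi_A$ this gives the desired limit for ${\rm Leb}_D$-almost every $x\in A$. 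The main obstacle is precisely this step when $\dim E^u\ge 2$: the sets $D_n^u(x)$ need not have uniformly bounded eccentricity in the Riemannian metric on $D$, since the singular values of $Df^{-n}|_{E^u}$ can be widely separated. The resolution is to avoid shape regularity altogether and run the covering/maximal-function machinery using only the measure-level bounded distortion of the full determinant and the engulfing property above.
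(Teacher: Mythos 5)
The paper does not actually prove this lemma: it is quoted from \cite{Xia} (Proposition 3.1) and \cite{CYZ18} (Theorem 2.18), and your argument --- bounded distortion of the unstable Jacobian along unstable disks, followed by a Vitali-type covering/maximal-function argument for the dynamical basis $\{D_n^u(x)\}$ that deliberately avoids any shape-regularity (Besicovitch) hypothesis --- is essentially the proof found in those references, and is correct. The one point your Step 3 leaves implicit is that the Vitali selection must compare sets with \emph{different} indices: if $D_m^u(y)$ meets $D_n^u(x)$ with $m\ge n$, you need $D_m^u(y)\subset f^{-n}\bigl(W^u_{3\delta}(f^n x)\bigr)$, which follows from your same-index engulfing combined with the nestedness $D_{k+1}^u(z)\subset D_k^u(z)$ (guaranteed after passing to an adapted metric in which $f^{-1}$ contracts unstable disks at every single step); with that supplement the weak $(1,1)$ bound and the differentiation of $\chi_A$ go through exactly as you describe.
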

We say that $x$ is an \emph{unstable Lebesgue density} point if it satisfies (\ref{und}).

\subsection{Gibbs $u$-state and Gibbs $cu$-state}



Let $f$ be a $C^2$ diffeomorphism with partially hyperbolic splitting $TM=E^u\oplus_{\succ} E^{cs}$. 
Recall that an $f$-invariant measure $\mu$ is a Gibbs $u$-state if its conditional measures along strong unstable manifolds are absolutely continuous w.r.t. Lebesgue measures on these manifolds. Let us collect several properties of Gibbs $u$-states, see \cite[Subsection 11.2]{BDV05} for details.

\begin{proposition}\label{uc}
Let $f$ be a diffeomorphism with partially hyperbolic splitting $TM=E^u\oplus_{\succ} E^{cs}$, one has the following properties:
\begin{enumerate}
\item\label{11} The ergodic components of any Gibbs $u$-state are Gibbs $u$-states.
\item\label{22} The set of Gibbs $u$-states is compact in the weak$^*$-topology.
\item\label{33} For Lebesgue almost every $x\in M$, any measure in $\mathscr{M}(x)$ is a Gibbs $u$-state. 
\end{enumerate}
\end{proposition}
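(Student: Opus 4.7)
All three statements rest on the Pesin--Sinai description of a Gibbs $u$-state: its conditional density on a local strong unstable plaque $W^u_{\delta}(y)$ is (up to normalization on the plaque) the continuous, positive, uniformly bounded function
\[
\Delta(z,y)=\prod_{n=1}^{\infty}\frac{\bigl|\det Df^{-1}|_{E^u(f^{-n}(y))}\bigr|}{\bigl|\det Df^{-1}|_{E^u(f^{-n}(z))}\bigr|},
\]
where the product converges thanks to uniform expansion of $E^u$ and the $C^2$ hypothesis (bounded distortion along unstable leaves). For Part (1), fix a Gibbs $u$-state $\mu$ with ergodic decomposition $\mu=\int\mu_x\,d\mu(x)$; inside a foliation box $B$ for $W^u_{\delta}$, the conditionals of $\mu|_B$ on unstable plaques have density $\Delta(\cdot,y)$. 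Since the ergodic partition is measurable and $f$-invariant while unstable holonomies are absolutely continuous, a Fubini-type interchange of the two disintegrations shows that for $\mu$-a.e.\ $x$ the conditionals of $\mu_x|_B$ on plaques still have density $\Delta(\cdot,y)$, and hence $\mu_x$ is itself a Gibbs $u$-state.

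\textbf{Part (2).} Suppose $\mu_n\to\mu$ weakly with each $\mu_n$ a Gibbs $u$-state. Cover $M$ by finitely many foliation boxes $B$ with projection $\pi_B$ onto a smooth transversal $\Sigma_B$. For a continuous $\varphi$ supported in $B$,
\[
\int \varphi\,d\mu_n=\int_{\Sigma_B}\!\int_{W^u_{\delta}(y)} \varphi(z)\,\Delta(z,y)\,d{\rm Leb}_{W^u_{\delta}(y)}(z)\,d(\pi_B)_*\mu_n(y).
\]
The inner integral is continuous in $y$, so the weak$^*$ limit yields the same formula for $\mu$, and $\mu$ inherits the Gibbs density on unstable plaques; covering $M$ by such $B$'s and using a partition of unity gives compactness.

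\textbf{Part (3).} Pick any strong unstable disk $D$ and consider the Ces\`aro averages $\nu_n:=\tfrac{1}{n}\sum_{i=0}^{n-1}f^i_*({\rm Leb}_D/{\rm Leb}_D(D))$. Each $f^i_*{\rm Leb}_D$ is absolutely continuous along $f^i(D)$ with density controlled by bounded distortion, so every weak$^*$ accumulation point of $\{\nu_n\}$ is a Gibbs $u$-state (the standard Pesin--Sinai construction). Combining this with the unstable density lemma (Lemma \ref{ubd}) and a Borel--Cantelli argument, one shows that for ${\rm Leb}_D$-a.e.\ $x\in D$, every measure in $\mathscr{M}(x)$ agrees with a weak$^*$ accumulation point of averages taken over the shrinking unstable neighborhoods $D_n^u(x)$, hence is a Gibbs $u$-state. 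Finally, by absolute continuity of the strong unstable foliation one has, locally, ${\rm Leb}=\int{\rm Leb}_{W^u_{\delta}(y)}\,d\nu(y)$, so the a.e.\ property on each leaf promotes to Lebesgue a.e.\ in $M$.

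\textbf{Main obstacle.} The delicate step is Part (3): promoting the measure-theoretic statement about Ces\`aro averages on one unstable disk to a pointwise statement for \emph{every} measure in $\mathscr{M}(x)$ at ${\rm Leb}_D$-a.e.\ $x\in D$. This requires a careful density-point argument via Lemma \ref{ubd}, uniform continuity of $\Delta(\cdot,y)$ in $y$, and an approximation of continuous test functions by a countable dense family so that null sets can be unioned without destroying full measure.
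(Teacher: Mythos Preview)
The paper does not supply its own proof of this proposition; it simply refers the reader to \cite[Subsection~11.2]{BDV05}. Your sketch is precisely the standard argument recorded there (and going back to Pesin--Sinai \cite{ps82}): the explicit bounded-distortion density $\Delta$ on unstable plaques for Parts~(1)--(2), and the Pesin--Sinai averaging construction together with absolute continuity of the strong unstable foliation for Part~(3), so there is nothing to contrast---your proposal is a compressed version of the proof the paper is citing.
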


%
%

%

Recall another notion called \emph{Gibbs $cu$-states}.
Let $f$ be a diffeomorphism with partially hyperbolic splitting $TM=E\oplus_{\succ} F$. An invariant measure $\mu$ called a Gibbs $cu$-state (of $E$) if
\begin{itemize}
\item all the Lyapunov exponents of $\mu$ along $E$ are positive, 
\item the conditional measure of $\mu$ along Pesin unstable manifolds tangent to $E$ are absolutely continuous w.r.t. Lebesgue measure on these manifolds.
\end{itemize}

We have the following result. See a proof in \cite[Lemma 2.4]{cv} or \cite[Proposition 4.7]{CMY22}.
\begin{proposition}\label{cue}
Assume that $f$ is a $C^2$ diffeomorphism with a dominated splitting $TM = E\oplus_{\succ} F$. If $\mu$ is a Gibbs $cu$-state of $E$, then almost every ergodic component of $\mu$ is also a Gibbs $cu$-state of $E$.
\end{proposition}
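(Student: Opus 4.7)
The plan is to check that both defining properties of a Gibbs $cu$-state of $E$ are inherited by $\hat\mu$-a.e.\ ergodic component in the decomposition $\mu = \int \mu_\alpha\, d\hat\mu(\alpha)$.

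First, the positivity of all Lyapunov exponents along $E$ is a pointwise $f$-invariant condition on the Oseledets splitting at $x$; since it holds for $\mu$-a.e.\ $x$, it automatically holds for $\mu_\alpha$-a.e.\ $x$ for $\hat\mu$-a.e.\ $\alpha$, which takes care of the first bullet in the definition.

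For the absolute continuity of the conditionals, my strategy would be to combine a Pesin-style Hopf argument with Rokhlin disintegration. I would fix a Pesin block $\Lambda$ of positive $\mu$-measure on which local Pesin unstable manifolds $W^E_\delta(x)$ tangent to $E$ have uniform size and vary measurably, and then pick a measurable partition $\xi$ of $\Lambda$ subordinate to this foliation, with $\xi(x)$ an open neighborhood of $x$ inside $W^E_\delta(x)$. By hypothesis, the conditionals $\mu^\xi_x$ are absolutely continuous with respect to $\mathrm{Leb}_{\xi(x)}$ for $\mu$-a.e.\ $x\in\Lambda$. Next I would invoke a Pesin-theoretic Hopf argument: since $E$ carries only positive Lyapunov exponents on $\Lambda$, $f^{-n}$ contracts $W^E_\delta(x)$ uniformly on $\Lambda$, so backward Birkhoff averages of continuous observables agree along $W^E_\delta(x)$. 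Coupled with the absolute continuity of $\mu^\xi_x$, which ensures that $\mathrm{Leb}_{W^E_\delta(x)}$-a.e.\ $y$ is $\mu$-generic, this implies that $\mathrm{Leb}_{W^E_\delta(x)}$-a.e.\ $y\in W^E_\delta(x)$ lies in the same ergodic class $\mathcal{E}_{\alpha(x)}$ as $x$, for $\mu$-a.e.\ $x$. Thus the atoms of $\xi$ essentially respect the ergodic decomposition, giving $\mu^\xi_x = (\mu_{\alpha(x)})^\xi_x$ for $\mu$-a.e.\ $x$; in particular $(\mu_{\alpha(x)})^\xi_x \ll \mathrm{Leb}_{\xi(x)}$. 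A Fubini-type rearrangement then promotes this to absolute continuity of the $\xi$-conditionals of $\mu_\alpha$ for $\hat\mu$-a.e.\ $\alpha$, and a countable exhaustion of $M$ by Pesin blocks (of growing regularity) completes the proof.

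I expect the main obstacle to be the Hopf step in the non-uniform setting: transferring the equality of Birkhoff averages from $\mu$-a.e.\ points to $\mathrm{Leb}_{W^E_\delta(x)}$-a.e.\ points on the Pesin manifold (which is where the absolute continuity hypothesis enters crucially), together with handling the fact that the full Pesin manifold through $x$ may escape any single Pesin block and requires the nested exhaustion. An alternative route would replace these steps with a Ledrappier--Young style entropy characterization of Gibbs $cu$-states of $E$ together with the ergodic decomposition of entropy and the Ruelle-type inequality along $E$, but in either case the delicate point is the same: one must pass an equality obtained for $\mu$ to $\hat\mu$-a.e.\ component $\mu_\alpha$.
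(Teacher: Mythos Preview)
The paper does not give its own proof of this proposition; it simply cites \cite[Lemma~2.4]{cv} and \cite[Proposition~4.7]{CMY22}. Your Hopf-type argument is essentially the one in V\'asquez \cite{cv}: points on the same local Pesin unstable plaque have converging backward orbits, hence identical backward Birkhoff averages, so modulo $\mu$ the partition $\xi$ refines the ergodic decomposition, and the $\xi$-conditionals of each $\mu_\alpha$ coincide with those of $\mu$. Your alternative route through the Ledrappier--Young entropy characterization together with the affine decomposition of entropy is closer in spirit to the argument in \cite{CMY22}.

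One small slip worth fixing: absolute continuity $\mu^\xi_x\ll\mathrm{Leb}_{\xi(x)}$ does \emph{not} by itself imply that $\mathrm{Leb}_{\xi(x)}$-a.e.\ $y$ is $\mu$-generic; that would require equivalence of the two measures. Fortunately you do not need the Lebesgue-a.e.\ statement at all. What you actually use is that $\mu^\xi_x$-a.e.\ $y\in\xi(x)$ is $\mu$-generic (immediate, since the set of generic points has full $\mu$-measure) and shares the backward Birkhoff limit of $x$; this already gives $\mu^\xi_x(\mathcal{E}_{\alpha(x)})=1$ and hence $(\mu_{\alpha(x)})^\xi_x=\mu^\xi_x$ for $\mu$-a.e.\ $x$. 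With this correction the sketch is sound.
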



\subsection{Pseudo-physical measures}
Given a homeomorphism $f$ on $M$.
An $f$-invariant measure $\mu$ called a \emph{pseudo-physical measure} if 
$$
{\rm Basin}_{\varepsilon}(\mu)=\{x\in M: {\rm dist}(\mathscr{M}(x),\mu)<\varepsilon\}
$$
has positive Lebesgue measure for any $\varepsilon>0$. 

The notion of pseudo-physical measure was introduced by Catsigeras and Enrich \cite{ce11} to generalize physical measures. By definition we know that every physical measure is also a pseudo-physical measure.

\begin{lemma}\label{ps1}\cite[Theorem 1.5]{ce11}
For every homeomorphism $f$ on $M$. The set of pseudo-physical measures of $f$ is nonempty, which is a minimal compact invariant subset containing $\mathscr{M}(x)$ for Lebesuge almost every point $x\in M$.
\end{lemma}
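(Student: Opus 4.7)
The plan is to verify the four assertions of Lemma \ref{ps1} in order --- nonemptiness, compactness and $f$-invariance, Lebesgue-a.e.\ containment of the limit sets $\mathscr{M}(x)$, and minimality --- working directly from the definition of ${\rm Basin}_\varepsilon(\mu)$ together with compactness of $\mathscr{M}$ and continuity of $f$. Throughout, let $\mathcal{P}^*$ denote the set of pseudo-physical measures.

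First I would prove nonemptiness by a diagonal argument. Pick $\varepsilon_n \downarrow 0$; since $\mathscr{M}$ is compact, cover it by finitely many balls of radius $\varepsilon_n$, and observe that the union of the associated sets $\{{\rm Basin}_{\varepsilon_n}(\cdot)\}$ equals $M$ because each $x \in M$ has $\mathscr{M}(x) \neq \emptyset$. Hence some ball center $\mu_n$ satisfies ${\rm Leb}({\rm Basin}_{\varepsilon_n}(\mu_n)) > 0$. Extract a weak$^*$ limit $\mu$, and use the triangle inequality ${\rm Basin}_{\varepsilon_n}(\mu_n) \subseteq {\rm Basin}_\varepsilon(\mu)$ (valid once $\varepsilon_n + {\rm dist}(\mu_n, \mu) < \varepsilon$) to conclude $\mu \in \mathcal{P}^*$. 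The same triangle inequality yields closedness, hence compactness, of $\mathcal{P}^*$: if $\mu_n \to \mu$ are pseudo-physical, then ${\rm Basin}_{\varepsilon/2}(\mu_n) \subseteq {\rm Basin}_\varepsilon(\mu)$ for $n$ large. Invariance of each $\mu \in \mathcal{P}^*$ is automatic, since every element of $\mathscr{M}(x)$ is $f$-invariant (standard consequence of continuity of $f$), and $\mu$ is a weak$^*$ limit of such invariant measures.

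For the a.e.\ containment $\mathscr{M}(x) \subseteq \mathcal{P}^*$, second-countability of $\mathscr{M}$ is the key. Since $\mathscr{M} \setminus \mathcal{P}^*$ is open and every $\nu$ in it admits some $\varepsilon_\nu > 0$ with ${\rm Leb}({\rm Basin}_{\varepsilon_\nu}(\nu)) = 0$, the Lindel\"of property yields a countable subcover $\{B(\nu_j, \varepsilon_{\nu_j}/2)\}_{j\in\NN}$ of $\mathscr{M} \setminus \mathcal{P}^*$. If $\mathscr{M}(x) \not\subseteq \mathcal{P}^*$, some $\nu \in \mathscr{M}(x)$ lies in one $B(\nu_j, \varepsilon_{\nu_j}/2)$, which forces $x \in {\rm Basin}_{\varepsilon_{\nu_j}}(\nu_j)$, and a countable union of these null sets remains null. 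For minimality, given any closed invariant $K \subseteq \mathscr{M}$ that contains $\mathscr{M}(x)$ for Lebesgue-a.e.\ $x$, take $\mu \in \mathcal{P}^*$ and $\varepsilon > 0$: the positive-measure set ${\rm Basin}_\varepsilon(\mu)$ contains some $x$ with $\mathscr{M}(x) \subseteq K$, producing a measure in $K$ at distance less than $\varepsilon$ from $\mu$; closedness of $K$ then forces $\mu \in K$, so $\mathcal{P}^* \subseteq K$.

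The step I expect to require the most care is the a.e.\ containment: the naive approach, which fixes a bad $\nu \in \mathscr{M}(x)$ for each bad $x$ and unions an uncountable family of zero-measure sets, fails. The Lindel\"of reduction on the open set $\mathscr{M} \setminus \mathcal{P}^*$ is what turns this into a countable union of null sets, and it depends crucially on already knowing that $\mathcal{P}^*$ is closed --- so the closedness half of the compactness step must logically precede the containment step.
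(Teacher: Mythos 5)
Your proof is correct and complete: the finite $\varepsilon$-cover of $\mathscr{M}$ for nonemptiness, the triangle-inequality inclusion ${\rm Basin}_{\varepsilon'}(\mu_n)\subseteq {\rm Basin}_\varepsilon(\mu)$ for closedness, the Lindel\"of reduction to a countable union of null sets for the a.e.\ containment, and the density argument for minimality all go through (and you correctly flag that closedness must be established before the containment step). The paper itself gives no proof of this lemma --- it is quoted from Catsigeras--Enrich \cite[Theorem 1.5]{ce11} --- and your argument is essentially the standard one from that reference.
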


\begin{remark}\label{hh}
From Lemma \ref{ps1} and property (\ref{33}) of Proposition \ref{uc}, we know that every pseudo-physical measure is a Gibbs $u$-state for any partially hyperbolic diffeomorphism exhibiting $E^u$.
\end{remark}

\begin{lemma}\label{ps2}\cite[Theorem 1]{cce15}
Assume that $f$ is a diffeomorphism with dominated splitting $TM=E\oplus_{\succ} F$. Then any pseudo-physical measure $\mu$ satisfies 
$$
h_{\mu}(f)\ge \int \log |{\rm det}Df|_{E}|d\mu.
$$
\end{lemma}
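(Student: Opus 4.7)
The plan is to combine a volume bound on atoms of iterated partitions, extracted from the dominated splitting, with the pseudo-physical basin assumption, processed through a Shannon--McMillan--Breiman / Katok-style argument. I set the shorthand $\varphi := \log|{\rm det}Df|_E|$ and $S_n\varphi(x) := \sum_{i=0}^{n-1}\varphi(f^i x)$, so the target inequality reads $h_\mu(f)\ge\int\varphi\,d\mu$.

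First, fix $\eta>0$ and choose a finite measurable partition $\xi$ of $M$ with diameter less than $\eta$ and with $\mu(\partial\xi)={\rm Leb}(\partial\xi)=0$, arranged in small charts so that each atom is, up to bounded distortion, a product of a disk tangent to a cone field around $E$ and a disk tangent to a cone field around $F$ (possible thanks to the continuity of the dominated splitting and standard fake local foliations). Writing $\xi_n := \bigvee_{i=0}^{n-1}f^{-i}\xi$, the key geometric estimate is the uniform bound
$$
{\rm Leb}(\xi_n(x)) \le C_\eta\,\exp\bigl(-S_n\varphi(x)\bigr),
$$
which follows because the $E$-direction of $\xi_n(x)$, being pulled back from a set of $E$-diameter at most $\eta$, contracts by the singular values of $Df^n|_{E(x)}$ whose product is $|{\rm det}Df^n|_E(x)|$, while domination prevents the $F$-diameter from growing beyond the starting scale $\eta$ under iterated pullback.

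Next, invoke the pseudo-physical hypothesis: for each $\epsilon>0$, ${\rm Leb}({\rm Basin}_\epsilon(\mu))>0$. For Lebesgue-a.e.\ $x\in{\rm Basin}_\epsilon(\mu)$ some subsequence $\mu_{n_k}(x)$ converges weakly to a measure $\nu$ with ${\rm dist}(\nu,\mu)<\epsilon$; since $\varphi$ is continuous, along this subsequence $\frac{1}{n_k}S_{n_k}\varphi(x)\to\int\varphi\,d\nu$, which differs from $\int\varphi\,d\mu$ by some modulus $\omega(\epsilon)\to 0$. Combined with the geometric bound, each atom $\xi_{n_k}(x)$ meeting such a typical $x$ has Lebesgue measure at most $\exp\bigl(-n_k(\int\varphi\,d\mu - \omega(\epsilon))\bigr)$. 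A pigeonhole on ${\rm Basin}_\epsilon(\mu)$ then forces the number of distinct atoms of $\xi_{n_k}$ carrying orbits with $\epsilon$-basin behavior to grow at least like $\exp\bigl(n_k(\int\varphi\,d\mu - \omega(\epsilon))\bigr)$.

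Finally, because these orbits have empirical distributions weak$^*$ within $\epsilon$ of $\mu$, a Katok-type $(n,\delta)$-spanning argument restricted to a weak$^*$-neighborhood of $\mu$ (equivalently, a Shannon--McMillan--Breiman bound on $\mu(\xi_n(x))$ obtained by transferring the Lebesgue count to $\mu$-measure via the pseudo-physical definition and the partition's $\mu$-negligible boundary) converts the atom count into the entropy lower bound $h_\mu(f,\xi)\ge\int\varphi\,d\mu - \omega(\epsilon)$. Sending $\epsilon\to 0$ and refining $\xi$ ($\eta\to 0$) gives $h_\mu(f)\ge\int\varphi\,d\mu$. The main obstacle is precisely this last transition from Lebesgue-based atom-counting to the entropy of $\mu$: in the uniformly hyperbolic setting absolute continuity of stable foliations does the job, but here neither $E$ nor $F$ need be hyperbolic for $\mu$, so the argument must route through weak$^*$-neighborhoods of $\mu$ and exploit the pseudo-physical definition together with the dominated-splitting product structure to localize the count inside $\mu$-adapted Bowen balls.
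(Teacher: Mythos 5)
A preliminary remark: the paper does not prove this lemma at all --- it is imported verbatim as \cite[Theorem 1]{cce15}, so there is no internal proof to compare against; what follows is an assessment of your argument on its own terms. Your sketch does follow the broad strategy of that reference: a volume estimate for atoms of iterated partitions extracted from the dominated splitting, combined with a counting argument over the positive-Lebesgue-measure sets ${\rm Basin}_{\varepsilon}(\mu)$. The first ingredient, ${\rm Leb}(\xi_n(x))\le C_\eta\exp(-S_n\varphi(x))$, is essentially correct and is the content of the rectangle lemma in \cite{cce15}, although your justification via singular values needs the cone-field invariance and a Fubini-type slicing to be made honest, and the cited theorem is for $C^1$ diffeomorphisms, where bounded distortion is unavailable.

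The genuine gap is the final step, which you yourself flag as ``the main obstacle'' and then do not overcome. Counting atoms of $\xi_n$ that meet a set of positive \emph{Lebesgue} measure gives, by itself, no lower bound on $h_\mu(f,\xi)=\lim_n\frac{1}{n}H_\mu(\xi_n)$, because the latter is governed by how $\mu$ --- which may be mutually singular with ${\rm Leb}$ --- distributes mass among atoms. Shannon--McMillan--Breiman and Katok's formula both require $\mu$-typical points or sets of positive $\mu$-measure, whereas the pseudo-physical hypothesis only supplies Lebesgue-typical points whose empirical measures approach a measure $\varepsilon$-close to $\mu$; postulating any comparison between $\mu(\xi_n(x))$ and ${\rm Leb}(\xi_n(x))$ would essentially amount to the absolute-continuity conclusion the whole theory is after (via Ledrappier--Young), hence is circular. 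There is also a uniformity defect in your pigeonhole: the times $n_k$ along which $\mu_{n_k}(x)$ is $\varepsilon$-close to $\mu$ depend on $x$, so for a fixed $n$ the set of such $x$ need not have Lebesgue measure bounded below, and the atom count $N_n\ge \delta_n e^{na}$ is vitiated if $\delta_n$ decays exponentially. The standard repair (choose one point per atom, form the uniform measure on the resulting $(n,\xi)$-separated set, and run Misiurewicz's subadditivity argument) only yields invariant measures $\tilde\mu_\varepsilon$ that are $\varepsilon$-close to $\mu$ with large entropy; passing to $\varepsilon\to0$ would then require upper semicontinuity of $\nu\mapsto h_\nu(f)$, which fails for general $C^1$ and even $C^2$ diffeomorphisms. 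Bridging exactly this Lebesgue-count-to-$\mu$-entropy passage is the technical heart of \cite{cce15}, and your proposal does not supply it.
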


As an immediate consequence of Lemma \ref{ps2} and Lemma \ref{ps1}, one also has the next result:

\begin{corollary}\label{cug}
Let $f$ be a diffeomorphism with dominated splitting $TM=E\oplus_{\succ} F$. Then for Lebesgue almost every $x\in M$, any limit measure $\mu\in\mathscr{M}(x)$ satisfies 
$$
h_{\mu}(f)\ge \int \log |{\rm det}Df|_{E}| d \mu.
$$
\end{corollary}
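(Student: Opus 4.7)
The statement is advertised as an immediate consequence of Lemmas \ref{ps1} and \ref{ps2}, so my plan is to simply chain these two facts together, with the only subtlety being the correct reading of Lemma \ref{ps1}.

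First, I would invoke Lemma \ref{ps1} to fix a full Lebesgue measure subset $R\subset M$ such that for every $x\in R$, every limit measure $\mu\in\mathscr{M}(x)$ lies in the set $\mathscr{P}$ of pseudo-physical measures of $f$. Concretely, the lemma asserts that $\mathscr{P}$ is a compact invariant set containing $\mathscr{M}(x)$ for Lebesgue almost every $x$, so I simply take $R$ to be the full measure subset on which this inclusion $\mathscr{M}(x)\subset\mathscr{P}$ holds.

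Second, I would fix any $x\in R$ and any $\mu\in\mathscr{M}(x)$. By the previous step, $\mu$ is a pseudo-physical measure, and by Lemma \ref{ps2} applied to the dominated splitting $TM=E\oplus_{\succ} F$, we obtain
\[
h_{\mu}(f)\ge \int \log |{\rm det}Df|_{E}|\, d\mu,
\]
which is exactly the desired inequality. Since this holds for every $\mu\in\mathscr{M}(x)$ and every $x$ in the full measure set $R$, the corollary follows.

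There is no genuine obstacle here; the only point worth double-checking is that the full measure subset produced by Lemma \ref{ps1} is indeed independent of the choice of limit measure $\mu$, which is evident because the inclusion $\mathscr{M}(x)\subset\mathscr{P}$ is a statement about $x$ alone.
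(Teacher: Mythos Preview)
Your proposal is correct and follows exactly the approach the paper indicates: the paper states the corollary as an immediate consequence of Lemma~\ref{ps1} (which gives $\mathscr{M}(x)\subset\mathscr{P}$ for Lebesgue a.e.\ $x$) and Lemma~\ref{ps2} (which gives the entropy inequality for every pseudo-physical measure), and you have simply written out this two-step chain explicitly.
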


\section{Proof of Theorem \ref{TheoA}}
Now we assume that $f$ is a $C^{2}$ diffeomorphism with partially hyperbolic splitting $TM=E^u\oplus_{\succ} E^c\oplus_{\succ} E^s$. 
Given an ergodic measure $\mu$, denote by $\lambda^{c}_{\max}(\mu)$ the maximal Lyapunov exponent of $\mu$ along $E^c$, that is
$$
\lambda^{c}_{\max}(\mu)=\lim_{n\to +\infty}\frac{1}{n}\log \|Df^n|_{E^c(x)}\|, \quad \mu\textrm{-a.e.}~x\in M.
$$

In what follows, we will find finitely physical measures with basin covering property from the set of ergodic Gibbs $u(cu)$-states. 
Denote by $G^u_{erg}(f)$ the set of ergodic Gibbs $u$-states of $f$.
Let $\mathcal{A}$ be the set of ergodic Gibbs $u$-states with negative central Lyapunov exponents, i.e.,
$$
\mathcal{A}=\{\mu \in G_{erg}^u(f): \lambda^{c}_{\max}(\mu)<0\}.
$$
Let $\mathcal{B}$ be the set of ergodic Gibbs $cu$-states. 

By applying the absolute continuity of (Pesin) stable lamination, we conclude that every measure of $\mathcal{A}\cup \mathcal{B}$ is a physical measure. Moreover, we have the next observation.
\begin{theorem}\label{pre}
Let $f$ be a $C^2$ diffeomorphism with a partially hyperbolic splitting 
$
TM=E^u\oplus_{\succ} E^c\oplus_{\succ} E^s.
$
If $E^c$ is $u$-definite, then there exist physical measures contained in $\mathcal{A}\cup \mathcal{B}$.
\end{theorem}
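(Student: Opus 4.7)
The plan is to start from the existence of a pseudo-physical measure and extract an ergodic component that lies in $\mathcal{A}\cup\mathcal{B}$ via the entropy inequalities provided by Lemma \ref{ps2} and Ruelle, closed off by the Ledrappier--Young characterization of SRB measures.

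First I would pick any pseudo-physical measure $\mu$ (Lemma \ref{ps1}); by Remark \ref{hh} it is a Gibbs $u$-state, and Lemma \ref{ps2} applied to the dominated splitting $(E^u\oplus E^c)\oplus_{\succ} E^s$ yields
$$h_\mu(f)\ \ge\ \int\log|\det Df|_{E^u\oplus E^c}|\,d\mu.$$
Ergodically decompose $\mu=\int \nu\,d\hat\mu(\nu)$; property (1) of Proposition \ref{uc} ensures that $\hat\mu$-almost every $\nu$ is an ergodic Gibbs $u$-state, and the $u$-definite assumption then forces the central Lyapunov exponents of each such $\nu$ to be either all positive or all negative.

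If the set of components with all-negative central exponents has positive $\hat\mu$-mass, it meets $\mathcal{A}$ and we are done, since every element of $\mathcal{A}$ is a physical measure by absolute continuity of the Pesin stable lamination (as recalled right before the statement). Otherwise $\hat\mu$-a.e.\ ergodic component $\nu$ has all central Lyapunov exponents strictly positive, so by Ruelle's inequality, noting that $E^s$ is uniformly contracting,
$$h_\nu(f)\ \le\ \int\log|\det Df|_{E^u\oplus E^c}|\,d\nu.$$
Equality here is equivalent (Ledrappier--Young) to absolute continuity of the disintegration of $\nu$ along its Pesin unstable manifolds, which in this regime are tangent to $E^u\oplus E^c$; that is, equality characterizes the condition $\nu\in\mathcal{B}$. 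If no ergodic component were in $\mathcal{B}$, the inequality would be strict $\hat\mu$-a.e., and integrating together with $h_\mu(f)=\int h_\nu\,d\hat\mu(\nu)$ would give $h_\mu(f)<\int\log|\det Df|_{E^u\oplus E^c}|\,d\mu$, contradicting the opening estimate. Hence a positive-$\hat\mu$-measure set of components lies in $\mathcal{B}$, and again each is a physical measure.

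The main technical ingredient is the clean use of the Ledrappier--Young characterization to upgrade the Ruelle upper bound into a dichotomy (equality $\Leftrightarrow$ Gibbs $cu$-state along $E^u\oplus E^c$); this is what converts the weak entropy estimate of Lemma \ref{ps2} into absolute continuity of the conditional measures, which is the source of physicality. A minor bookkeeping issue is ensuring that ``strict inequality $\hat\mu$-a.e.'' integrates to strict inequality, which requires measurability of $\nu\mapsto h_\nu(f)$ and of $\nu\mapsto\int\log|\det Df|_{E^u\oplus E^c}|\,d\nu$ and integrability, both standard from compactness of $M$ and $C^2$-smoothness of $f$. Everything else is the routine reconciliation of the pseudo-physical entropy estimate, passed through the ergodic decomposition, with the Pesin entropy formula along $E^u\oplus E^c$.
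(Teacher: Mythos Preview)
Your argument is correct, and it takes a genuinely different route from the paper. The paper argues by a blunt dichotomy: either some ergodic Gibbs $u$-state has all-negative central exponents (so $\mathcal{A}\neq\emptyset$ and we are done), or else every ergodic Gibbs $u$-state has all-positive central exponents, which by definition means $E^c$ is mostly expanding, and the existence of ergodic Gibbs $cu$-states (hence elements of $\mathcal{B}$) is then imported wholesale from Andersson--V\'asquez \cite[Theorem A]{AV15}. You instead start from a single pseudo-physical measure, use the Catsigeras--Cerminara--Enrich lower bound (Lemma \ref{ps2}) on the splitting $(E^u\oplus E^c)\oplus_{\succ} E^s$, pass to the ergodic decomposition, and play this lower bound off against Ruelle's upper bound componentwise; Ledrappier--Young then turns the resulting equality for some component into membership in $\mathcal{B}$. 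The paper's proof is shorter because it outsources the hard case to \cite{AV15}; your proof is more self-contained, uses only tools already assembled in \S2, and in fact mirrors the entropy-formula mechanism the paper itself deploys later in Propositions \ref{cup} and \ref{ess}. One small remark: the step ``strict inequality $\hat\mu$-a.e.\ integrates to strict inequality'' is fine as stated, but you should also note that affinity of entropy under ergodic decomposition is what justifies $h_\mu(f)=\int h_\nu(f)\,d\hat\mu(\nu)$.
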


\begin{proof}
From properties (\ref{11}) and (\ref{33}) of Proposition \ref{uc}, we have the existence of ergodic Gibbs $u$-states. If there exists some ergodic Gibbs $u$-state $\mu$ with negative central Lyapunov exponents, that is, $\mu\in \mathcal{A}$, then it is a physical measure. Otherwise, since $E^c$ is $u$-definite, all the ergodic Gibbs $u$-states must admit only positive Lyapunov exponents along $E^c$. In this case, $E^c$ is mostly expanding for $f$. According to \cite[Theorem A]{AV15}, it follows that there are finitely many physical measures, which are ergodic Gibbs $cu$-states, so are in $\mathcal{B}$. Up to now, we have shown the desired result.
\end{proof}

In view of Theorem \ref{pre}, to complete the proof of Theorem \ref{TheoA}, it suffices to prove the following two theorems.
\begin{theorem}\label{covering}
Let $f$ be a $C^2$ diffeomorphism with a partially hyperbolic splitting 
$
TM=E^u\oplus_{\succ} E^c\oplus_{\succ} E^s.
$
If $E^c$ is $u$-definite, then
$$
{\rm Leb}\left(M\setminus \bigcup_{\mu\in \mathcal{A}\cup \mathcal{B}}{\rm Basin}(\mu)\right)=0.
$$
\end{theorem}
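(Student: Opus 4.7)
The plan is to force a Pesin entropy formula along $E^u \oplus E^c$ on the positive-central-exponent ergodic components of Lebesgue-typical limit measures, and then to harvest basin membership from the absolute continuity of the Pesin stable laminations of measures in $\mathcal{A} \cup \mathcal{B}$. Write $\phi := \log|\det Df|_{E^u \oplus E^c}|$ and view the dominated splitting as $(E^u \oplus E^c) \oplus_{\succ} E^s$. By Corollary \ref{cug}, Proposition \ref{uc}(\ref{33}) and Lemma \ref{ubd}, let $R \subset M$ be the full Lebesgue measure subset of points $x$ at which every $\nu \in \mathscr{M}(x)$ is a Gibbs $u$-state satisfying $h_\nu(f) \geq \int \phi \, d\nu$ and at which $x$ is an unstable Lebesgue density point. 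It suffices to show that $R \setminus \bigcup_{\mu \in \mathcal{A} \cup \mathcal{B}} \mathrm{Basin}(\mu)$ is Lebesgue null.

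Fix $x \in R$ and $\nu \in \mathscr{M}(x)$, and ergodically decompose $\nu = \int \mu_\omega \, dP(\omega)$. By Proposition \ref{uc}(\ref{11}) each $\mu_\omega$ is an ergodic Gibbs $u$-state, and the $u$-definite hypothesis partitions $P$-a.e.\ $\omega$ into $A_-$, where $\mu_\omega \in \mathcal{A}$, and $A_+$, where all central Lyapunov exponents of $\mu_\omega$ are positive. The central entropy-formula step is the ``pure $A_+$'' case $P(A_-) = 0$: there, the only positive Lyapunov exponents of $\mu_\omega$ lie along $E^u \oplus E^c$, so Ruelle's inequality gives $h_{\mu_\omega} \leq \int \phi \, d\mu_\omega$ for $P$-a.e.\ $\omega$. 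Integrating against $P$ and comparing with the lower bound $h_\nu \geq \int \phi \, d\nu$ forces $h_{\mu_\omega} = \int \phi \, d\mu_\omega$ for $P$-a.e.\ $\omega$. By the Ledrappier-Young entropy formula this equality characterizes $\mu_\omega$ as a Gibbs $cu$-state of $E^u \oplus E^c$, hence $\mu_\omega \in \mathcal{B}$; so the ergodic components of $\nu$ all lie in $\mathcal{B}$.

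To pass from ``some ergodic component of some $\nu \in \mathscr{M}(x)$ lies in $\mathcal{A} \cup \mathcal{B}$'' to $x \in \mathrm{Basin}(\mu)$ for such a $\mu$, I would invoke the absolute continuity of the Pesin stable laminations of $\mathcal{A}$- and $\mathcal{B}$-measures (tangent to $E^c \oplus E^s$ and to $E^s$ respectively). A standard Pesin-block argument together with the unstable Lebesgue density property of $x$ then shows that, for Lebesgue-almost every $x \in R$, if the forward orbit of $x$ accumulates with positive frequency on $\mathrm{supp}(\mu)$ for some $\mu \in \mathcal{A} \cup \mathcal{B}$, then $x \in \mathrm{Basin}(\mu)$. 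Proposition \ref{cue} ensures that Gibbs $cu$-states encountered as non-ergodic limit measures decompose into ergodic Gibbs $cu$-states in $\mathcal{B}$, so the argument applies to both $\mathcal{A}$- and $\mathcal{B}$-components uniformly.

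The principal obstacle is the ``mixed'' case $0 < P(A_-) < 1$: here the entropy argument of paragraph two does not close, because the Gibbs $u$ identity $h_{\mu_\omega} = \int \log|\det Df|_{E^u}| \, d\mu_\omega$ on the $A_-$-part produces a positive slack $-\int \log|\det Df|_{E^c}|\,d\mu_\omega$ that can absorb the Ruelle deficit on the $A_+$-part. I would resolve this by a dichotomy: the stable-lamination density argument of paragraph three already places in $\bigcup_{\mu \in \mathcal{A}} \mathrm{Basin}(\mu)$ any $x \in R$ whose limit measures carry positive mass on some $\mathcal{A}$-component; after removing this Lebesgue-measurable set, the remaining points must satisfy $P(A_-) = 0$ for every $\nu \in \mathscr{M}(x)$, so the pure-$A_+$ entropy step applies and yields Gibbs $cu$-state components in $\mathcal{B}$. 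A final application of the stable-lamination step delivers basin membership. Auxiliary pseudo-physical-measure considerations (Remark \ref{hh}, Lemmas \ref{ps1}--\ref{ps2}), following the spirit of \cite{HYY19}, will be used to make the variational ingredient stable under the ergodic decomposition.
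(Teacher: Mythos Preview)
Your proposal is correct and follows essentially the same route as the paper. The paper first proves your ``dichotomy'' step as a separate lemma (Lemma~\ref{ccu}): outside $\bigcup_{\nu\in\mathcal{A}}\mathrm{Basin}(\nu)$, every limit measure has only positive central exponents, established exactly via the unstable-density/Pesin-stable-lamination contradiction you sketch in paragraphs three and four. Your pure-$A_+$ entropy step is the paper's Proposition~\ref{cup}; the only cosmetic difference is that the paper applies Ruelle and Ledrappier--Young directly to the (possibly non-ergodic) limit measure $\nu$ and then invokes Proposition~\ref{cue} to extract ergodic $cu$-components, whereas you decompose first and use affinity of entropy---either works.

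One small point where the paper is slicker: for the $\mathcal{B}$-basin step you propose the same Pesin-block/unstable-density argument as for $\mathcal{A}$, but since the stable direction of a Gibbs $cu$-state is exactly the \emph{uniformly} contracted $E^s$, the paper instead saturates a Pesin unstable $cu$-disk by genuine strong stable leaves to obtain an \emph{open} set $\mathcal{U}$ of full $\mathrm{Basin}(\nu)$-measure; a single visit $f^m(x)\in\mathcal{U}$ plus continuity and ordinary Lebesgue density of $x$ then give the contradiction, bypassing the unstable density basis entirely for this half of the argument. Your proposed uniform treatment is not wrong, just slightly heavier than necessary. The pseudo-physical-measure machinery you mention at the end is not actually needed here; the paper reserves it for the finiteness statement (Theorem~\ref{fin}).
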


\begin{theorem}\label{fin}
Let $f$ be a $C^2$ diffeomorphism with a partially hyperbolic splitting 
$
TM=E^u\oplus_{\succ} E^c\oplus_{\succ} E^s.
$
If $E^c$ is $u$-definite, then $\# (\mathcal{A}\cup \mathcal{B})<+\infty$.
\end{theorem}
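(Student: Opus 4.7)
My plan has three main steps.

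Step 1. I would first verify that every measure in $\mathcal{A}\cup\mathcal{B}$ is already an ergodic physical measure. For $\mu\in\mathcal{A}$ this is noted in the paragraph preceding Theorem \ref{pre}: the Pesin stable lamination tangent to $E^c\oplus E^s$ exists and is absolutely continuous (all central Lyapunov exponents of $\mu$ are negative and $E^s$ is uniformly contracted), which combined with the $u$-Gibbs property forces ${\rm Basin}(\mu)$ to have positive Lebesgue measure. For $\mu\in\mathcal{B}$ the symmetric argument applies: the absolutely continuous disintegration on Pesin unstable manifolds tangent to $E^u\oplus E^c$ combined with the absolute continuity of the uniform strong stable foliation $W^{ss}$.

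Step 2. For each $\mu\in\mathcal{A}\cup\mathcal{B}$ I would establish the existence of a $\mu$-generic point $x_\mu$ such that ${\rm Leb}_{W^u_\delta(x_\mu)}$-almost every point of $W^u_\delta(x_\mu)$ lies in ${\rm Basin}(\mu)$, where $\delta>0$ is the uniform size of local strong unstable manifolds on $M$. For $\mu\in\mathcal{A}$ this combines the $u$-Gibbs disintegration with absolute continuity of the Pesin stable lamination. For $\mu\in\mathcal{B}$ one first observes that every Gibbs $cu$-state is also a Gibbs $u$-state (the absolutely continuous disintegration on Pesin $W^{cu}$ refines via Fubini, along the strong unstable subfoliation, to an absolutely continuous disintegration on $W^u$), and then repeats the argument with the uniform strong stable foliation in place of the Pesin stable lamination.

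Step 3. Assume for contradiction that $\mathcal{A}\cup\mathcal{B}$ contains an infinite sequence $\{\mu_n\}$ of distinct measures with associated points $\{x_n\}$ as in Step 2. By compactness of $M$ and of the set of Gibbs $u$-states (Proposition \ref{uc}(2)), one can pass to a subsequence with $\mu_n\to\mu_\infty$ and $x_n\to x_\infty$; the disks $W^u_\delta(x_n)$ then converge in $C^1$ to $W^u_\delta(x_\infty)$. In a small product neighborhood of $x_\infty$, I would use the uniformly absolutely continuous strong stable holonomy, supplemented by a Pesin-type holonomy along the center direction on a common Pesin block (obtained by a further extraction), to transport ${\rm Leb}_{W^u_\delta(x_n)}$-full subsets of $W^u_\delta(x_n)\cap {\rm Basin}(\mu_n)$ onto pairwise disjoint subsets of $W^u_\delta(x_\infty)$ of uniformly positive ${\rm Leb}_{W^u_\delta(x_\infty)}$-measure, contradicting the finiteness of ${\rm Leb}_{W^u_\delta(x_\infty)}(W^u_\delta(x_\infty))$.

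The main obstacle is Step 3. The Pesin stable (for $\mathcal{A}$) and unstable (for $\mathcal{B}$) manifolds along the center direction have sizes that depend on the measure, so no single holonomy works for all $\mu_n$ simultaneously. To circumvent this I would exploit the $u$-definite hypothesis together with the compactness of the set of Gibbs $u$-states: the former forbids the central exponents from degenerating to zero in each ergodic component of $\mu_\infty$, so after a further extraction one may arrange that all $\mu_n$'s have central exponents bounded away from zero uniformly, which yields a uniform lower bound on the relevant Pesin block sizes. This reduction to uniform Pesin geometry is the delicate technical step, paralleling the arguments used in \cite{bv} and \cite{AV15}.
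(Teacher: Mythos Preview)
Your Step 3 contains a genuine gap. The $u$-definite hypothesis says only that each individual ergodic Gibbs $u$-state has central exponents of a definite sign; it does \emph{not} give a uniform lower bound on $|\lambda^c|$ over all such measures. Thus along a sequence $\mu_n\in\mathcal{A}$ one may have $\lambda^c_{\max}(\mu_n)\to 0^-$: by upper semi-continuity of $\mu\mapsto\inf_n\frac{1}{n}\int\log\|Df^n|_{E^c}\|\,d\mu$ the limit $\mu_\infty$ then has $\int\lambda^c_{\max}\,d\mu_\infty\ge 0$, which is perfectly compatible with $u$-definiteness (some ergodic components of $\mu_\infty$ can be ``positive'' and others ``negative''). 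So your claimed reduction to uniform Pesin geometry does not follow. There is a second problem for the $\mathcal{B}$-case: when the center is expanding there is no stable lamination tangent to $E^c\oplus E^s$, and the strong stable foliation alone has the wrong dimension to define a holonomy between the $\dim E^u$-dimensional disks $W^u_\delta(x_n)$ and $W^u_\delta(x_\infty)$; the ``Pesin-type holonomy along the center direction'' you invoke simply does not exist in that direction. The paper's introduction explicitly flags this obstruction: the set of Gibbs $u$-states with positive central exponents is not compact, so the geometric approach of \cite{AV15,MCY17} cannot be adapted.

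The paper circumvents all of this by a variational argument. Since each $\mu\in\mathcal{A}\cup\mathcal{B}$ is physical, it is pseudo-physical; any weak$^*$ limit $\mu_\infty$ is again pseudo-physical (Lemma~\ref{ps1}), hence satisfies $h_{\mu_\infty}(f)\ge\int\log|\det Df|_{E^u\oplus E^c}|\,d\mu_\infty$ by Lemma~\ref{ps2}. A separate application of Lemma~\ref{sec} (with $E=E^u$) forces $\mu_\infty$ to have a non-negative central exponent almost everywhere, and then $u$-definiteness upgrades this to all central exponents positive; combined with Ruelle's inequality one gets the Pesin entropy formula, so $\mu_\infty$ is a Gibbs $cu$-state (Proposition~\ref{ess}). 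For $\mathcal{B}$ infinite, Lemma~\ref{sec} (now with $E=E^u\oplus E^c$, $F=E^s$) immediately gives a contradiction. For $\mathcal{A}$ infinite, one uses the $cu$-structure of $\mu_\infty$ to build an open set $\mathcal{U}$ whose Lebesgue-a.e.\ point lies in the basin of a single ergodic Gibbs $cu$-state $\nu$, with $\mu_\infty(\mathcal{U})>0$; then $\mu_n(\mathcal{U})>0$ for large $n$, and one only needs the Pesin stable lamination of that \emph{single} $\mu_n$ to show $\mathrm{Leb}(\mathrm{Basin}(\mu_n)\cap\mathrm{Basin}(\nu))>0$, forcing $\mu_n=\nu$. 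No uniformity across the sequence is ever required.
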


We will give the proofs of Theorem \ref{covering} and Theorem \ref{fin} in next two subsections.

\subsection{Proof of Theorem \ref{covering}}

\begin{lemma}\label{ccu}
Let $f$ be a $C^2$ diffeomorphism with a partially hyperbolic splitting 
$
TM=E^u\oplus_{\succ} E^c\oplus_{\succ} E^s.
$
If $E^c$ is $u$-definite and ${\rm Leb}(M\setminus \cup_{\nu\in \mathcal{A}}{\rm Basin}(\nu))>0$, then for Lebesgue almost every $x\in M\setminus \cup_{\nu\in \mathcal{A}}{\rm Basin}(\nu)$, any $\mu\in \mathscr{M}(x)$ admits only positive Lyapunov exponents along $E^c$. 
\end{lemma}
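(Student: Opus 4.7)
The plan is to proceed by contradiction. Suppose there is a Borel subset $S' \subseteq M \setminus B$ with ${\rm Leb}(S') > 0$, where $B := \bigcup_{\nu \in \mathcal{A}} {\rm Basin}(\nu)$, such that for every $x \in S'$ some limit measure $\mu_x \in \mathscr{M}(x)$ has an ergodic component $\nu_x \in \mathcal{A}$. By Proposition~\ref{uc}(\ref{33}) I may restrict to the full-measure set of $x$ where every $\mu \in \mathscr{M}(x)$ is a Gibbs $u$-state, and by Proposition~\ref{uc}(\ref{11}) combined with $u$-definiteness, every ergodic component of such a $\mu$ is either in $\mathcal{A}$ or is an ergodic Gibbs $u$-state with all positive central Lyapunov exponents.

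The central idea is that each $\nu \in \mathcal{A}$ carries a Pesin stable lamination tangent to $E^{cs} := E^c \oplus E^s$, since it has only negative exponents transverse to $E^u$. On a Pesin block $\Lambda$ of positive $\nu_x$-measure whose local stable manifolds $W^{cs}_r(z)$ have a uniform size $r > 0$, the $W^{cs}$-lamination is absolutely continuous (see~\cite[\S 8.6]{bp07}), so any local strong unstable disk of size $\delta$ passing sufficiently close to $\Lambda$ meets the saturation $\bigcup_{z \in \Lambda} W^{cs}_r(z) \subseteq {\rm Basin}(\nu_x) \subseteq B$ in a subset whose Lebesgue density on the disk is bounded below by some $c > 0$. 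Since $\nu_x$ has positive weight in the ergodic decomposition of $\mu_x$, the orbit of $x$ visits every neighborhood of $\Lambda$ with positive asymptotic frequency (by Portmanteau applied to the defining subsequential convergence), yielding times $k_i \to \infty$ with $f^{k_i}(x) \to \Lambda$. Hence, for all large $i$, $W^u_\delta(f^{k_i}(x)) \cap B$ has Lebesgue density at least $c$ on $W^u_\delta(f^{k_i}(x))$.

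Pulling back by $f^{-k_i}$ and using bounded distortion of $Df^{k_i}|_{E^u}$ together with the $f$-invariance of $B$, the element $D^u_{k_i}(x)$ of the unstable density basis of $x$ inherits a uniform lower bound on the density of $B$, so $\limsup_{i \to \infty} {\rm Leb}_{D^u_{k_i}(x)}(D^u_{k_i}(x) \cap B) / {\rm Leb}_{D^u_{k_i}(x)}(D^u_{k_i}(x)) > 0$. On the other hand, applying Lemma~\ref{ubd} to $A := M \setminus B$ (which has ${\rm Leb}_D$-positive intersection with a.e.\ local unstable disk $D$ through $x \in S'$, by absolute continuity of the strong unstable foliation), the density of $M \setminus B$ along $\{D^u_n(x)\}$ tends to $1$ for ${\rm Leb}$-a.e.\ $x \in S'$, equivalently the density of $B$ tends to $0$, contradicting the lower bound. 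The main obstacle is uniformity: the ergodic component $\nu_x$, the Pesin block $\Lambda$, and the constants $r, c$ all depend on $x$, so I would first stratify $\mathcal{A}$ by the Pesin-block data (the $\nu$-measure of a block, its stable-manifold size, and the absolute-continuity constant) and pass to a positive-measure stratum $S'_n \subseteq S'$ on which the density argument runs with uniform constants.
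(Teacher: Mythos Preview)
Your approach is essentially the paper's: argue by contradiction, use the unstable density basis (Lemma~\ref{ubd}) for the ``density $\to 1$'' direction, and use absolute continuity of the Pesin $cs$-lamination together with bounded distortion along $E^u$ for the ``density $\geq c$'' direction. Two points deserve comment.

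First, the stratification in your last paragraph is unnecessary. You do not need constants that are uniform over a positive-measure set of $x$; it suffices to fix a \emph{single} $x$ that is an unstable Lebesgue density point of $S'$ inside some unstable disk (such $x$ exist by Lemma~\ref{ubd} combined with absolute continuity of the strong unstable foliation). For that fixed $x$ you then choose $\mu_x$, $\nu_x$, the Pesin block, and the constants $r,c$ once and for all; the contradiction is obtained at this single point. This is exactly how the paper proceeds.

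Second, and this is a genuine gap, the assertion ``$\nu_x$ has positive weight in the ergodic decomposition of $\mu_x$'' is not justified and may fail: the ergodic decomposition of $\mu_x$ could be non-atomic on the set of components lying in $\mathcal{A}$, so no individual $\nu_x$ need carry positive mass, and you cannot conclude $\mu_x(\Lambda)>0$ from $\nu_x(\Lambda)>0$. The paper circumvents this via a support argument. One chooses the ergodic component $\nu$ so that ${\rm supp}(\nu)\subset{\rm supp}(\mu)$ (this holds for $\mu$-a.e.\ component). Since $\nu\in\mathcal{A}$ is itself a Gibbs $u$-state, there is a strong unstable disk $\mathcal{D}^u\subset{\rm supp}(\nu)$ on which ${\rm Leb}_{\mathcal{D}^u}$-a.e.\ point lies in ${\rm Basin}(\nu)$ and carries a local Pesin stable manifold tangent to $E^c\oplus E^s$; pass to a subset $\mathcal{D}\subset\mathcal{D}^u$ of positive leaf measure with uniform stable size. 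Now take $B$ to be an \emph{open} tubular neighbourhood of $\mathcal{D}^u$, thin enough that every strong unstable $\delta$-disk through a point of $B$ crosses all the Pesin stable leaves issued from $\mathcal{D}$. Because $B$ is open and meets ${\rm supp}(\mu)$, one has $\mu(B)>0$ regardless of the weight of $\nu$; the Portmanteau argument then produces infinitely many times $\ell_k$ with $f^{\ell_k}(x)\in B$, and from there your density computation goes through verbatim.
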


\begin{proof}
By property (\ref{22}) of Proposition \ref{uc}, one can take a subset $\mathcal{R}^u$ with full Lebesgue measure, such that for any $x\in \mathcal{R}^u$, any measure of $\mathscr{M}(x)$ is a Gibbs $u$-state. 
Arguing by contradiction, recalling that $E^c$ is $u$-definite,
there exists a subset $\mathscr{A}\subset \mathcal{R}^u\setminus \cup_{\nu\in \mathcal{A}}{\rm Basin}(\nu)$ with positive Lebesgue measure such that for every $x\in \mathscr{A}$, there exists $\mu\in \mathscr{M}(x)$ having some negative Lyapunov exponents along $E^c$. 


\smallskip
Take an open subset $U$ with ${\rm Leb}(U\cap \mathscr{A})>0$. By foliating $U$ with unstable disks and applying the absolute continuity of unstable foliation we can choose an unstable Lebesgue density point $x$ of $\mathscr{A}$ in some unstable disk $D$. Fix $\delta<{\rm diam}D$. By the choice of $\mathscr{A}$, we can fix $\mu\in \mathscr{A}$ for which there are negative Lyapunov exponents along $E^c$. 
By ergodic decomposition theorem \cite[Theorem 6.4]{Man87}, we have 
$$
\mu({\rm supp}(\mu))=\int \mu_z({\rm supp}(\mu))d\mu(z),
$$ 
which implies that ${\rm supp}(\mu_z)\subset{\rm supp}(\mu)$ for $\mu$-a.e.~$z$. Then, in view of the central Lyapunov exponents of $\mu$, one can find an ergodic component $\nu$ of $\mu$ satisfying
\begin{enumerate}[(i)]
\item\label{cp1} $\lambda_{\max}^c(\nu)<0,$
\smallskip
\item\label{cp2} ${\rm supp}(\nu)\subset {\rm supp}(\mu).$
\end{enumerate} 
As $\mu$ is a Gibbs $u$-state, $\nu$ is in $\mathcal{A}$ by Property (\ref{11}) of Proposition \ref{uc}.
Observe that the conditional measures of $\mu$ along unstable manifolds are absolutely continuous w.r.t. the Lebesgue measures. This together with property (\ref{cp1}) and the ergodicity of $\nu$ imply that there exists an unstable disk $\mathcal{D}^u$ with following properties:
\begin{itemize}
\item ${\rm diam}(\mathcal{D}^u)<\delta/2$,
\smallskip
\item $\mathcal{D}^u\subset {\rm supp}(\nu)$,
\smallskip
\item ${\rm Leb}_{\mathcal{D}^u}$-almost every point of $\mathcal{D}^u$ is contained in ${\rm Basin}(\nu)$ and admits the local Pesin unstable manifold.
\end{itemize}
The last property implies that there exists a subset $\D\subset \mathcal{D}^u\cap {\rm Basin}(\nu)$ satisfying ${\rm Leb}_{\cD^u}(\D)>0$ and every point of $\D$ admits local Pesin stable manifold with uniform size.


According to the choice of $\mathcal{D}^u$ above, by considering open tubular neighborhood of $\D^u$, one can construct an open neighborhood $B$ containing $\mathcal{D}^u$ such that 
\begin{itemize}
\item for every point $y\in B$, the unstable disk of $y$ with radius $\delta$ intersects every local Pesin unstable manifold from $\D$ transversely. 
\end{itemize}
By property (\ref{cp2}), we obtain 
$\mu(B)>0$. Due to $\mu\in \mathscr{M}(x)$, there is a subsequence $\{n_k\}\subset \NN$ such that $\frac{1}{n_k}\sum_{i=0}^{n_k-1}\delta_{f^i(x)}$ converges in the weak$^*$-topology to some $\mu$, as $k\to +\infty$.
This gives that 
$$
\liminf_{k\to +\infty}\frac{1}{n_k}\sum_{i=0}^{n_k-1}\chi_{B}(f^i(x))\ge \mu(B)>0.
$$
As a consequence, there exists an increasing sequence $\{\ell_k\}_{k\in \NN}$ of integers tending to infinite such that 
$$
f^{\ell_k}(x)\in B, \quad \forall k\in \NN.
$$ 

For each $k\in \NN$,  denote by $D^u(f^{\ell_k}(x),\delta)$ the unstable disk of $f^{\ell_k}(x)$ with radius $\delta$. By the construction of $B$, it follow that $D^u(f^{\ell_k}(x),\delta)$ intersects every local Pesin stable manifold from $\D$ transversely. By applying the absolute continuity of Pesin stable lamination, one obtains a constant $\alpha>0$ satisfying
\begin{equation}\label{fp}
\frac{{\rm Leb}_{D^u(f^{\ell_k}(x),\delta)}(D^u(f^{\ell_k}(x),\delta)\cap {\rm Basin}(\nu))}{{\rm Leb}_{D^u(f^{\ell_k}(x),\delta)}(D^u(f^{\ell_k}(x),\delta))}>\alpha,\quad \forall k\in \NN.
\end{equation}
Put 
$$
D_k(x):=f^{-\ell_k}(D^u(f^{\ell_k}(x),\delta)) \quad \textrm{for every}~ k\in \NN.
$$
Since the unstable disks are uniformly contracted, by the standard bounded distortion argument, (\ref{fp}) yields a constant $\beta>0$ such that 
\begin{equation}\label{bd2}
\frac{{\rm Leb}_{D_k(x)}(D_k(x)\cap {\rm Basin}(\nu))}{{\rm Leb}_{D_k(x)}(D_k(x))}>\beta,\quad \forall k\in \NN.
\end{equation}
On the other hand, note that $\{D_k(x)\}_{k\in \NN}$  is the unstable density basis of $x$, from the choice of $x$ and Lemma \ref{ubd}, it follows that
\begin{equation}\label{bd3}
\frac{{\rm Leb}_{D_k(x)}(D_k(x)\cap \mathscr{A})}{{\rm Leb}_{D_k(x)}(D_k(x))}\xrightarrow [k\to +\infty]~1.
\end{equation}
Combining (\ref{bd2}) and (\ref{bd3}), we have for sufficiently large $k$ that
$$
{\rm Leb}_{D_k(x)}({\rm Basin}(\nu)\cap \mathscr{A})>0,
$$
which contradicts with the definition of $\mathscr{A}$, recalling that $\nu\in \mathcal{A}$
\end{proof} 

As a consequence of Lemma \ref{ccu}, together with the argument on entropy formulas we have

\begin{proposition}\label{cup}
Under the assumption of Lemma \ref{ccu}, we have that for Lebesgue almost every $x\in M\setminus \cup_{\nu\in \mathcal{A}}{\rm Basin}(\nu)$, any $\mu\in \mathscr{M}(x)$ is a Gibbs $cu$-state.

\end{proposition}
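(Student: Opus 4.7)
The plan is to combine Lemma \ref{ccu} with Corollary \ref{cug}, applied to the dominated splitting $TM=E^{cu}\oplus_{\succ}E^s$ where $E^{cu}:=E^u\oplus E^c$, in order to force Pesin's entropy formula for $\mu$, and then to invoke the Ledrappier--Young characterization of SRB measures.

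First I would fix a Lebesgue full measure set $\mathcal{R}\subset M\setminus \bigcup_{\nu\in\mathcal{A}}{\rm Basin}(\nu)$ as the intersection of three full measure sets: the one from Lemma \ref{ccu}, on which every $\mu\in\mathscr{M}(x)$ has only positive Lyapunov exponents along $E^c$; the one from property (\ref{33}) of Proposition \ref{uc}, on which every $\mu\in\mathscr{M}(x)$ is a Gibbs $u$-state; and the one from Corollary \ref{cug} applied with $E=E^{cu}$, on which every $\mu\in\mathscr{M}(x)$ satisfies
$$h_\mu(f)\ge \int \log |\det Df|_{E^{cu}}|\,d\mu.$$
Any $x\in\mathcal{R}$ and $\mu\in\mathscr{M}(x)$ then simultaneously enjoy these three properties.

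Next I would identify the right-hand side with the integrated sum of positive Lyapunov exponents of $\mu$ by Oseledets' theorem. For $\mu$-a.e.\ $z$, $\frac{1}{n}\log|\det Df^n|_{E^{cu}(z)}|$ converges to the sum of the Lyapunov exponents of $Df|_{E^{cu}}$ at $z$ with multiplicity; by Lemma \ref{ccu} all central exponents are positive, and since $E^s$ is uniformly contracting no further positive exponent exists, so this sum coincides with $\sum_{\lambda_i(z)>0}\lambda_i(z)$. Combined with Ruelle's inequality (applied to the ergodic components of $\mu$ and then integrated), the lower bound from Corollary \ref{cug} forces equality, which is precisely Pesin's entropy formula for $\mu$.

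Finally, since both sides of Pesin's formula are affine in the measure, the equality descends to $\mu$-a.e.\ ergodic component $\mu_z$ in the ergodic decomposition of $\mu$. Each such $\mu_z$ is hyperbolic, its positive Lyapunov subspace equals $E^{cu}$, and it satisfies Pesin's entropy formula; the Ledrappier--Young theorem then yields that the conditional measures of $\mu_z$ along its Pesin unstable manifolds tangent to $E^{cu}$ are absolutely continuous with respect to the induced Lebesgue measures. Hence $\mu_z$ is a Gibbs $cu$-state of $E^{cu}$, and integrating back over the ergodic decomposition shows that so is $\mu$. The main delicate point I anticipate is the bookkeeping in this last paragraph, namely the descent of Pesin's formula to almost every ergodic component and the matching of ``Pesin unstable manifold tangent to $E^{cu}$'' in the paper's definition of Gibbs $cu$-state with the full Pesin unstable manifold provided by Ledrappier--Young; both are routine given that $E^c$ has only positive exponents $\mu$-a.e.\ while $E^s$ is uniformly contracting.
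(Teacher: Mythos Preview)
Your proposal is correct and follows essentially the same route as the paper: combine Lemma \ref{ccu} with Corollary \ref{cug} (for the splitting $E^{cu}\oplus_{\succ}E^s$) and Ruelle's inequality to force Pesin's entropy formula, then invoke Ledrappier--Young \cite{ly}. The paper's argument is slightly more streamlined in that it applies Ruelle's inequality and \cite[Theorem~A]{ly} directly to the (possibly non-ergodic) measure $\mu$ rather than passing through the ergodic decomposition, and it does not invoke the Gibbs $u$-state property from Proposition~\ref{uc}(\ref{33}), which is not needed here.
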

\begin{proof}
By Lemma \ref{ccu}, for Lebesgue almost every $x\in M\setminus \cup_{\nu\in \mathcal{A}}{\rm Basin}(\nu)$, any $\mu\in \mathscr{M}(x)$ admits only positive Lyapunov exponents along $E^c$. By Ruelle's inequality, one has
$$
h_{\mu}(f)\le \int \log |{\rm det} Df|_{E^u\oplus E^c}| d\mu.
$$
On the other hand, Corollary \ref{cug} implies that by reducing to a smaller full Lebesgue measure subset, we have also that
$$
h_{\mu}(f)\ge \int \log |{\rm det} Df|_{E^u\oplus E^c}| d\mu.
$$
Therefore,
$$
h_{\mu}(f)= \int \log |{\rm det} Df|_{E^u\oplus E^c}| d\mu.
$$
The classical result \cite[Theorem A]{ly} tells us that $\mu$ is a Gibbs $cu$-state of $E^u\oplus E^c$.
\end{proof}

Now we can give the proof of Theorem \ref{covering}.

\begin{proof}[Proof of Theorem \ref{covering}] 
%
Assume by contradiction that
$$
{\rm Leb}\left(M\setminus \bigcup_{\nu\in \mathcal{A}\cup \mathcal{B}}{\rm Basin}(\nu)\right)>0.
$$
In particular, we have 
$$
{\rm Leb}\left(M\setminus \bigcup_{\nu\in \mathcal{A}}{\rm Basin}(\nu)\right)>0.
$$ 
By Proposition \ref{cup}, there exists a subset $\mathscr{N}\subset M\setminus \bigcup_{\nu\in \mathcal{A}\cup \mathcal{B}}{\rm Basin}(\nu)$ with positive Lebesgue measure such that for any $x\in \mathscr{N}$, any $\mu\in \mathscr{M}(x)$ is a Gibbs $cu$-state.
Now we take $x$ as a Lebesgue density point of $\mathscr{N}$. Fix any $\mu\in \mathscr{M}(x)$, thus there exists a subsequence $\{n_k\}$ of integers such that 
\begin{equation}\label{con}
\frac{1}{n_k}\sum_{i=0}^{n_k-1}\delta_{f^i(x)}\xrightarrow[k\to +\infty]{weak^*} \mu.
\end{equation}

As $\mu$ is a Gibbs $cu$-state, we have by Proposition \ref{cue} that almost every ergodic component of $\mu$ is a Gibbs $cu$-state, which is also physical.
Observe also that the support of almost every ergodic component of $\mu$ is contained in the support of $\mu$, as presented in the proof of Lemma \ref{ccu}.
Consequently, we can choose an ergodic component $\nu$ of $\mu$ such that 
\begin{itemize}
\item $\nu$ is an ergodic Gibbs $cu$-state, so is in $\mathcal{B}$,
\smallskip
\item ${\rm supp}(\nu)\subset {\rm supp}(\mu)$.
\end{itemize}

Let us take a local Pesin unstable disk $\gamma\subset {\rm supp}(\nu)$ tangent to $E^u\oplus E^c$ such that ${\rm Leb}_{\gamma}$-almost every point of $\gamma$ is contained in ${\rm Basin}(\nu)$. Consider the open subset $\mathcal{U}$ formed by local strong stable manifolds from $\gamma$ as follows
$$
\mathcal{U}=\bigcup_{y\in \gamma}W^s_{loc}(y),
$$
where $W^s_{loc}(y)$ denotes the local strong stable manifold of $y$.
Note that local strong stable manifold can only lie in the same basin. By the absolute continuity of stable lamination, one gets that Lebesgue almost every point of $\mathcal{U}$ is contained in the basin of $\nu$. 

Since ${\rm supp}(\nu)\subset {\rm supp}(\mu)$, we obtain $\nu(\mathcal{U})>0$ by construction. The convergence (\ref{con}) implies that there exists $m\in \NN$ such that 
$f^{m}(x)\in \mathcal{U}$. From the continuity of $f$, we know that there exists an open ball $B_x$ centered at $x$ such that $f^{m}(B_x)\subset \mathcal{U}$. By invariance of the basin ${\rm Basin}(\nu)$ we then have 
$$
{\rm Leb}(B_x\setminus {\rm Basin}(\nu))=0.
$$ 
However, since $x$ is a Lebesgue density of $\mathscr{N}$, one has that 
${\rm Leb}(B_x\cap \mathscr{N})>0$. By construction of $\mathscr{N}$, we then have 
$$
{\rm Leb}(B_x\setminus {\rm Basin}(\nu))>0,
$$
which is a contradiction and the proof is complete.
\end{proof}

\subsection{Proof of Theorem \ref{fin}}

%
%

We recall the following observation on Gibbs $cu$-states, see a proof in \cite[Proposition 3.2]{CM23}.
\begin{lemma}\label{sec}
Let $f$ be a diffeomorphism with a dominated splitting $TM=E\oplus_{\succ} F$. If
$\{\mu_n\}_{n\in \NN}$ is a sequence of different ergodic Gibbs $cu$-states of $E$ such that 
$\mu_n\xrightarrow[n\to +\infty]{weak^*} \mu$ and $\mu$ is a Gibbs $cu$-state of $E$, then for $\mu$-a.e. $x\in M$, there exist non-negative Lyapunov exponents along $F$.
\end{lemma}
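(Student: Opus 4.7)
The plan is to argue by contradiction. Suppose that on a $\mu$-positive measure set all Lyapunov exponents along $F$ are strictly negative. I will extract a hyperbolic SRB ergodic component $\nu$ of $\mu$ and show that for all sufficiently large $n$, ${\rm Basin}(\nu) \cap {\rm Basin}(\mu_n)$ has positive Lebesgue measure on a suitable Pesin-unstable disk of $\mu_n$, contradicting the pairwise distinctness (hence basin-disjointness) of the ergodic measures $\mu_n$ and $\nu$.

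By Proposition \ref{cue}, $\mu$ decomposes into ergodic Gibbs $cu$-states of $E$. Under the contradiction hypothesis I may pick one ergodic component $\nu$ whose Lyapunov exponents are positive along $E$ and strictly negative along $F$; in particular the Pesin stable manifolds of $\nu$ are tangent to $F$ almost everywhere. Absolute continuity of the Pesin stable lamination then makes $\nu$ a hyperbolic physical measure, and, as in the proof of Lemma \ref{ccu}, ${\rm supp}(\nu)\subset {\rm supp}(\mu)$. Mimicking the construction in the proof of Theorem \ref{covering}, I fix a Pesin block $\Lambda$ of $\nu$ with uniform stable size $r>0$, a Pesin-unstable disk $\gamma\subset {\rm supp}(\nu)$ tangent to $E$ with ${\rm Leb}_\gamma(\gamma\cap \Lambda\cap {\rm Basin}(\nu))>0$, and an open tubular neighborhood $\mathcal{U}$ of $\gamma$ of transverse radius less than $r/2$. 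Since $\gamma\subset {\rm supp}(\mu)$, $\mu(\mathcal{U})>0$; by weak$^*$-convergence and the Portmanteau property, $\mu_n(\mathcal{U})>0$ for all sufficiently large $n$, and the pairwise distinctness of the $\mu_n$ lets me assume $\mu_n\neq \nu$ for such $n$.

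Fixing such an $n$, absolute continuity of the disintegration of the Gibbs $cu$-state $\mu_n$ along its Pesin unstable manifolds tangent to $E$, combined with $\mu_n(\mathcal{U})>0$ and a Fubini-type argument, produces a Pesin-unstable disk $\tilde\gamma_n\subset \mathcal{U}$ tangent to $E$ with ${\rm Leb}_{\tilde\gamma_n}(\tilde\gamma_n)>0$ and ${\rm Leb}_{\tilde\gamma_n}$-a.e. point lying in ${\rm Basin}(\mu_n)$ (by ergodicity of $\mu_n$ and the SRB property along $E$). Because $\tilde\gamma_n$ lies within transverse distance $r/2$ from $\gamma$, the $\nu$-Pesin stable holonomy $H\colon \gamma\cap \Lambda \to \tilde\gamma_n$ is well-defined and absolutely continuous between the two $E$-transversals. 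Hence $H(\gamma\cap \Lambda\cap {\rm Basin}(\nu))$ is a ${\rm Leb}_{\tilde\gamma_n}$-positive subset of $\tilde\gamma_n$ consisting of points on $\nu$-stable leaves through ${\rm Basin}(\nu)$, and therefore contained in ${\rm Basin}(\nu)$ itself. Intersecting with the ${\rm Basin}(\mu_n)$-full subset of $\tilde\gamma_n$ produces a ${\rm Leb}_{\tilde\gamma_n}$-positive subset of ${\rm Basin}(\nu)\cap {\rm Basin}(\mu_n)$, the desired contradiction.

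The main technical obstacle is setting up the cross-disk stable-holonomy step with the correct uniformity: the Pesin block $\Lambda$ must be chosen so that stable leaves have uniform size $r$, $\mathcal{U}$ must be thin enough transversally that these leaves reach $\tilde\gamma_n$, and $\gamma$ must be broad enough in the $E$-direction to receive them. Once these ingredients are in place, transversality between $\tilde\gamma_n$ (tangent to $E$) and the $\nu$-Pesin stable leaves (tangent to $F$ on $\Lambda$) is automatic from the dominated splitting $E\oplus_\succ F$, and the standard absolute-continuity theory of Pesin stable holonomy applies between the two transverse $E$-disks $\gamma$ and $\tilde\gamma_n$.
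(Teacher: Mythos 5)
The paper gives no internal proof of Lemma \ref{sec}: it is quoted from \cite[Proposition 3.2]{CM23}, so there is nothing in the text to compare against line by line. Your overall strategy --- negate the conclusion, extract an ergodic component $\nu$ of $\mu$ that is a hyperbolic ergodic Gibbs $cu$-state (hence, by absolute continuity of the Pesin stable lamination, a physical measure), and force $\mathrm{Basin}(\nu)\cap\mathrm{Basin}(\mu_n)\neq\emptyset$ for at least two distinct large $n$ --- is the natural one and is consistent with the basin-intersection arguments the paper itself runs in Theorems \ref{covering} and \ref{fin}. The reduction steps (choice of $\nu$ via Proposition \ref{cue}, $\mathrm{supp}(\nu)\subset\mathrm{supp}(\mu)$, $\mu(\mathcal{U})>0$, the portmanteau inequality $\liminf_n\mu_n(\mathcal{U})\ge\mu(\mathcal{U})$ for open $\mathcal{U}$, and disjointness of basins of distinct ergodic measures) are all sound.

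The gap is in the sentence ``Because $\tilde\gamma_n$ lies within transverse distance $r/2$ from $\gamma$, the $\nu$-Pesin stable holonomy $H\colon\gamma\cap\Lambda\to\tilde\gamma_n$ is well-defined.'' It is not. The leaves $W^s_r(y)$, $y\in\gamma\cap\Lambda$, are $F$-dimensional disks issuing from a positive-measure but typically Cantor-like subset of $\gamma$; in a local product chart their union looks like $(\gamma\cap\Lambda)\times B^F(r)$, a set with empty interior in general. An $E$-dimensional disk $\tilde\gamma_n$ meets the leaf through $y$ only if $\tilde\gamma_n$ extends, in the $E$-direction, over the point $y$; but a Pesin unstable disk of $\mu_n$ obtained by disintegrating $\mu_n|_{\mathcal{U}}$ comes with no lower bound on its $E$-extent (the Pesin blocks of $\mu_n$ are not uniform in $n$, and even for fixed $n$ the disk can be arbitrarily small and can sit entirely inside a gap of $\gamma\cap\Lambda$). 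Hence the domain $\{y\in\gamma\cap\Lambda\cap\mathrm{Basin}(\nu):\,W^s_r(y)\cap\tilde\gamma_n\neq\emptyset\}$ may be empty, and the claimed $\mathrm{Leb}_{\tilde\gamma_n}$-positive image $H(\gamma\cap\Lambda\cap\mathrm{Basin}(\nu))$ is unjustified. Your closing list of uniformity requirements has the quantifier on the wrong disk: it is $\tilde\gamma_n$, not $\gamma$, that must be ``broad enough in the $E$-direction,'' and that is exactly what your construction does not provide. To close the gap one needs disks in $\mathrm{Basin}(\mu_n)$, tangent to the $E$-cone, of a definite size crossing the product box of $\nu$'s stable lamination --- for instance by pushing $\tilde\gamma_n$ forward to hyperbolic times of $\mu_n$ (Pliss lemma, using that $\mu_n$ is a Gibbs $cu$-state of $E$) to obtain images containing uniformly large sub-disks with bounded distortion, and then arranging that one of these lands across a Pesin block of $\nu$. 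That is a substantive additional argument, not a routine tightening of constants.
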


We need the following result on pseudo-physical measures.

\begin{proposition}\label{ess}
Let $f$ be a $C^2$ diffeomorphism with a partially hyperbolic splitting 
$
TM=E^u\oplus_{\succ} E^c\oplus_{\succ} E^s.
$
If $E^c$ is $u$-definite, then any limit measure of pseudo-physical measures is a Gibbs $cu$-state of $E^u\oplus E^{cu}$.
\end{proposition}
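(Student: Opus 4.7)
The approach is to show that the limit measure $\mu$ satisfies Pesin's entropy formula with respect to the bundle $E^u\oplus E^c$; via the Ledrappier-Young characterization \cite[Theorem A]{ly} this upgrades to absolute continuity of conditional measures along the center-unstable Pesin manifolds, which is exactly the Gibbs $cu$-state property of $E^u\oplus E^c$ (taking the ``$E^u\oplus E^{cu}$'' in the statement to mean $E^u\oplus E^c$).

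First, let $\mu_n\to \mu$ in weak$^*$-topology with each $\mu_n$ pseudo-physical. By Lemma \ref{ps1} the set of pseudo-physical measures is compact, so $\mu$ is itself pseudo-physical; by Remark \ref{hh} it is then a Gibbs $u$-state. Applying Lemma \ref{ps2} to the dominated splitting $(E^u\oplus E^c)\oplus_{\succ} E^s$ gives the half-inequality
$$
h_\mu(f)\ge \int \log|\det Df|_{E^u\oplus E^c}|\, d\mu.
$$
For the matching upper bound via Ruelle's inequality, I would need that at $\mu$-a.e.~point all Lyapunov exponents along $E^c$ are positive. Here the $u$-definite property enters: by Proposition \ref{uc}(\ref{11}) each ergodic component of $\mu$ is a Gibbs $u$-state, hence by $u$-definiteness has either all positive or all negative central exponents, and the second type lies in $\mathcal{A}$. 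The crucial step is to show that the total $\mu$-mass of the ``$\mathcal{A}$-part'' of the ergodic decomposition is zero, which I would carry out by combining the pseudo-physical lower bound above with Ruelle's inequality applied separately on the positive- and negative-central components, the Pesin equality $h_\nu(f)=\int\log|\det Df|_{E^u}|\, d\nu$ satisfied by each $\nu\in \mathcal{A}$, and the finiteness of $\mathcal{A}$ together with the manner in which the $\mu_n$ concentrate on $\mu$.

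Once the positivity of the central spectrum of $\mu$ is in place, Ruelle's inequality reverses the display above, giving Pesin equality
$$
h_\mu(f)=\int \log|\det Df|_{E^u\oplus E^c}|\, d\mu,
$$
and then \cite[Theorem A]{ly} delivers that $\mu$ is a Gibbs $cu$-state of $E^u\oplus E^c$. The main obstacle is precisely the step of eliminating the $\mathcal{A}$-part of $\mu$: the bare entropy inequalities for pseudo-physical Gibbs $u$-states do not by themselves force the negative-central ergodic components to vanish, so one has to exploit extra structure, such as the fact that each $\nu\in\mathcal{A}$ is isolated in the pseudo-physical set by its positive-Lebesgue basin (cf.~the absolute continuity of Pesin stable lamination used throughout \S3), or a direct use of Proposition \ref{cup} to argue that the empirical measures feeding into the pseudo-physical set, away from $\cup_{\nu\in\mathcal{A}}{\rm Basin}(\nu)$, only accumulate on Gibbs $cu$-states.
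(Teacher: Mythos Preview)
Your overall scaffold---the lower bound from Lemma~\ref{ps2}, the Ruelle upper bound, and the Ledrappier--Young characterization---matches the paper exactly, and you have correctly located the only nontrivial step: ruling out ergodic components of $\mu$ with negative central exponents. But you leave this step open, and none of the routes you sketch actually closes it.

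The paper handles this in one stroke via Lemma~\ref{sec}, applied with $E=E^u$ and $F=E^c\oplus E^s$. Since pseudo-physical measures are Gibbs $u$-states (Remark~\ref{hh}), and a Gibbs $u$-state is nothing but a Gibbs $cu$-state of the bundle $E^u$, the sequence $\mu_n\to\mu$ fits the hypotheses of Lemma~\ref{sec}. The conclusion is that for $\mu$-a.e.\ $x$ there is a non-negative Lyapunov exponent along $E^c\oplus E^s$; as $E^s$ is uniformly contracting, that exponent lies in $E^c$. Combined with $u$-definiteness (each ergodic component of $\mu$ is a Gibbs $u$-state by Proposition~\ref{uc}(\ref{11}), hence has central exponents all of one sign), this forces all central exponents of $\mu$ to be positive, and your remaining argument goes through verbatim.

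Your proposed alternatives do not work here. Invoking the finiteness of $\mathcal A$ is circular: Proposition~\ref{ess} is the main input in the proof of Theorem~\ref{fin}, which is where $\#(\mathcal A\cup\mathcal B)<\infty$ is established. Splitting $\mu=\alpha\mu^{+}+(1-\alpha)\mu^{-}$ and combining the Pesin equality $h_{\mu^{-}}(f)=\int\log|\det Df|_{E^u}|\,d\mu^{-}$ with Ruelle on $\mu^{+}$ and Lemma~\ref{ps2} on $\mu$ only yields $(1-\alpha)\int\log|\det Df|_{E^c}|\,d\mu^{-}\le 0$, which is automatically satisfied and gives no contradiction. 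Proposition~\ref{cup} controls empirical limits of Lebesgue-typical points \emph{outside} $\cup_{\nu\in\mathcal A}\mathrm{Basin}(\nu)$, but says nothing preventing a pseudo-physical limit from carrying mass on the $\mathcal A$-part. The missing idea is precisely Lemma~\ref{sec}.
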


\begin{proof}
Let $\{\mu_n\}$ be a sequence of pseudo-physical measures such that $\mu_n\to \mu$ for some $f$-invariant measure $\mu$ in the weak$^*$-topology. By Lemma \ref{ps1}, $\mu$ is also a pseudo-physical measure. By Remark \ref{hh}, $\mu_n, n\in \NN$, $\mu$ are Gibbs $u$-states. By Lemma \ref{sec}, for $\mu$-a.e. $x\in M$, there exist non-negative Lyapunov exponents along $E^c$. Since $E^c$ is $u$-definite, $\mu$ admits only positive Lyapunov exponents along $E^c$.
As $\mu$ is a pseudo-physical measure, by Lemma \ref{ps2} it satisfies 
$$
h_{\mu}(f)\ge \int \log |{\rm det}Df|_{E^u\oplus E^{c}}|d\mu.
$$
On the other hand, according to the sign of Lyapunov exponents of $\mu$, by Ruelle's inequality, it follows that
$$
h_{\mu}(f)\le \int \log |{\rm det}Df|_{E^u\oplus E^{c}}|d\mu.
$$
Hence, $\mu$ satisfies the Pesin entropy formula
$$
h_{\mu}(f) =\int \log |{\rm det}Df|_{E^u\oplus E^{c}}|d\mu,
$$
thus, $\mu$ is a Gibbs $cu$-state of $E^u\oplus E^{c}$. 
\end{proof}

We give the proof of Theorem \ref{fin} now.

\begin{proof}[Proof of Theorem \ref{fin}]
Observe firstly that all elements of $\mathcal{A}\cup \mathcal{B}$ are physical measures, so they are also pseudo-physical measures. 

Going by contradiction, we assume that $\mathcal{A}\cup \mathcal{B}$ is infinite. If $\mathcal{B}$ is infinite, then there exists a sequence of different Gibbs $cu$-states $\{\mu_n\}_{n\in \NN}$ in $\mathcal{B}$, which converges to some invariant measure $\mu$ in the weak$^*$-topology. By Proposition \ref{ess}, $\mu$ is also a Gibbs $cu$-state. We then derive the contradiction by applying Lemma \ref{sec}.

It remains to exclude the case when $\mathcal{A}$ is infinite. Argue by absurd, there exists a sequence $\{\mu_n\}_{n\in \NN}$ of different measures in $\mathcal{A}$ converging to some $f$-invariant measure $\mu$ in the weak$^*$-topology. By applying Proposition \ref{ess} again, $\mu$ is a Gibbs $cu$-state. Following the same argument in the proof of Theorem \ref{covering}, we can find an ergodic component $\nu$ of $\mu$ and an open subset $\mathcal{U}$ with the following properties: 
\begin{itemize}
\item $\nu$ is a Gibbs $cu$-state, so is in $\mathcal{B}$,
\smallskip
\item Lebesgue almost every point of $\mathcal{U}$ is contained in ${\rm Basin}(\nu)$,
\smallskip
\item $\mu(\mathcal{U})>0$.
\end{itemize}
Since $\mu_n$ converges to $\mu$ in the weak$^*$-topology as $n\to +\infty$, together with the third property above implies that
$$
\liminf_{n\to +\infty}\mu_n(\mathcal{U})\ge \mu(\mathcal{U})>0.
$$
As a result, $\mu_n(\mathcal{U})>0$ for all sufficiently large $n$. For such $n\in \NN$, as $\mu_n$ is an ergodic Gibbs $u$-state with negative central Lyapunov exponents, one can take a local strong unstable disk $\gamma$ inside $\mathcal{U}$ and a subset $\gamma_0\subset\gamma$ with constant $L_0$ so that 
\begin{itemize}
\item  $\gamma_0\subset {\rm Basin}(\mu_n)$ and ${\rm Leb}_{\gamma}(\gamma_0)>0$, 
\smallskip
\item all points of $\gamma_0$ admit Pesin local stable manifolds with uniform size $L_0$.
\end{itemize} 
Thus, by choosing $\rho<L_0$ small enough, the set 
$$
\mathcal{U}_{\rho}=\bigcup_{x\in \gamma_0} W^{cs}_{\rho}(x)
$$
is contained in $\mathcal{U}$, where $W^{cs}_{\rho}(x)$ denotes the local Pesin stable manifold of radius $\rho$ centered at $x$ tangent to $E^c\oplus E^{s}$. Observe that the Pesin stable manifold can only lie in the same basin, we have also that $\mathcal{U}_{\rho}\subset {\rm Basin}(\mu_n)$. To summarize, we have shown
\begin{equation}\label{c11}
\mathcal{U}_{\rho}\subset \mathcal{U}\cap {\rm Basin}(\mu_n).
\end{equation}
From the choice of $\gamma_0$ and the absolute continuity of Pesin stable lamination, it follows that 
$$
{\rm Leb}(\mathcal{U}_{\rho})>0.
$$
Together with (\ref{c11}), this implies  
$$
{\rm Leb}(\mathcal{U}\cap {\rm Basin}(\mu_n))>0.
$$
Recall that Lebesgue almost every point of $\mathcal{U}$ is contained in ${\rm Basin}(\nu)$, so we have
$$
{\rm Leb}({\rm Basin}(\nu)\cap {\rm Basin}(\mu_n))>0.
$$
Note that this holds for any sufficiently large $n$, so we get $\mu_n=\nu$ for all sufficiently large $n$, this would contradict to our assumption on $\{\mu_n\}_{n\in \NN}$.  The proof is complete. 
\end{proof}

\end{document}